\def\ps@pprintTitle{   \let\@oddhead\@empty
    \let\@evenhead\@empty
     \def\@oddfoot{\centerline{\thepage}}      \let\@evenfoot\@oddfoot}
\definecolor{red}{rgb}{0.7592, 0.3137, 0.3020}
\definecolor{blu}{rgb}{0.3098, 0.5059, 0.7412}
\definecolor{grn}{rgb}{0,0.4,0}
\definecolor{org}{rgb}{1.0,0.5,0.0}
\definecolor{prp}{rgb}{0.4,0.35,0.8}
\newcommand{\eref}[1]{{(\ref{#1})}}
\newcommand{\mat}[1]{{\boldsymbol{#1}} }
\renewcommand{\vec}[1]{{\boldsymbol{#1}} }
\newcommand{\LL}{\ensuremath{l} }
\newcommand{\avg}[1]{\ensuremath{\left\{\!\left\{ #1 \right\}\!\right\} } }
\newcommand{\jmp}[1]{\ensuremath{\left[\!\left[ #1 \right]\!\right]}}
\newcommand{\EE}{\ensuremath{\mathcal{E}} }
\newcommand{\GG}{\ensuremath{\mathcal{G}} }
\newcommand{\En}{\ensuremath{\mathcal{F}} }\newcommand{\GD}{\ensuremath{W} }\newtheorem{theorem}{Theorem}[section]
\newtheorem{corollary}{Corollary}[section]
\renewcommand{\underbar}[1]{\mkern1.5mu\underline{\mkern-1.5mu#1\mkern-1.5mu}\mkern1.5mu}
\begin{document}

\begin{frontmatter}

\title{Robust Approaches to Handling Complex Geometries with Galerkin Difference
  Methods}

  \author[NPS]{Jeremy E. Kozdon\corref{cor1}}
  \ead{jekozdon@nps.edu}
  \cortext[cor1]{corresponding author}
  \author[NPS]{Lucas C. Wilcox}
  \ead{lwilcox@nps.edu}
  \author[SMU]{Thomas Hagstrom}
  \ead{thagstrom@smu.edu}
  \author[RPI]{Jeffrey W. Banks}
  \ead{banksj3@rpi.edu}

  \address[NPS]{Department of Applied Mathematics,
          Naval Postgraduate School,
          833 Dyer Road,
          Monterey, CA 93943--5216}
  \address[SMU]{Department of Mathematics,
          Southern Methodist University,
          PO Box 750156,
          Dallas, TX 75275--0156}
  \address[RPI]{Department of Mathematical Sciences,
          Rensselaer Polytechnic Institute,
          110 8th Street,
          Troy, New York 12180.}
  \nonumnote{The views expressed in this document are those of the
  authors and do not reflect the official policy or position of the Department
  of Defense or the U.S.  Government.\\ Approved for public release;
  distribution unlimited\\}

\begin{abstract}
  The Galerkin difference (GD) basis is a set of continuous, piecewise polynomials
  defined using a finite difference like grid of degrees of freedom. The one
  dimensional GD basis functions are naturally extended to multiple dimensions
  using the tensor product constructions to quadrilateral elements for
  discretizing partial differential equations. Here we
  propose two approaches to handling complex geometries using the GD basis
  within a discontinuous Galerkin finite element setting: (1) using
  non-conforming, curvilinear GD elements and (2) coupling affine GD elements
  with curvilinear simplicial elements. In both cases the (semidiscrete)
  discontinuous Galerkin method is provably energy stable even when variational
  crimes are committed and in both cases a weight-adjusted mass matrix is used,
  which ensures that only the reference mass matrix must be inverted.
  Additionally, we give sufficient conditions on the treatment of metric terms
  for the curvilinear, nonconforming GD elements to ensure that the scheme is
  both constant preserving and conservative. Numerical experiments confirm the
  stability results and demonstrate the accuracy of the coupled schemes.
\end{abstract}

\begin{keyword}

  Galerkin difference methods \sep{}
  discontinuous Galerkin methods \sep{}
  high order \sep{}
  structure grids \sep{}
  curvilinear meshes \sep{}
  wave propagation \sep{}
  coupled methods
\end{keyword}

\end{frontmatter}

\section{Introduction}
The aim of this work is to propose two possible approaches to handling complex
geometries in a robust and efficient manner when Galerkin difference (GD)
methods are used to approximate partial differential equations.
GD methods, initially proposed in~\cite{BanksHagstrom2016JCP},
are element based methods where the approximation space over each element is
based on a finite difference like grid of degrees of freedom. Between the grid
points (called subcells below) a polynomial is built using neighboring grid
values; the support of the polynomial over a subcell is finite, but the support
extends beyond the subcell. It is worth noting that in general one would have
many GD subcells in each GD element and the number of degrees of freedom inside
an GD element is independent of the polynomial order. The GD
method may be extended to multiple dimensions using a tensor product
construction~\cite{BanksHagstromJacangelo2018JCP}; details of the GD
approximation space are discussed in Section~\ref{sec:GDbasis}.

Since the GD method is an element based method, two-dimensional complex
geometries can be handled by partitioning the domain into a set of curvilinear
quadrilateral elements. Since the GD method is based on a finite difference like
grid of degrees of freedom it is desirable to have large elements with
resolution requirements handled by increasing the number of degrees of freedom
within the element. This may pose a challenge for two reasons: (1) if the
elements are required to be conforming (e.g., the same number of grid points are
required on both sides of an element interface) unnecessarily fine grids maybe
be required in some regions and (2) some of the cells maybe be so small that just
having a few GD subcells inside these elements may require extremely fine grids.

To address these challenges, we propose two approaches: (1) the use of
nonconforming, curvilinear GD elements and (2) coupling affine GD elements with
curvilinear, polynomial simplicial elements. An example of both approaches is
shown in Figure~\ref{fig:decomp}. In both cases, the method used is a
discontinuous Galerkin method where the interelement coupling (whether GD-GD,
GD-simplicial, or simplicial-simplicial) is done using numerical fluxes. The
advantage of using a discontinuous Galerkin method as the underlying coupling
methodology is that the resulting global mass matrix is block diagonal,
which only requires the efficient inversion of elemental mass matrices.

A desirable property of any numerical method is stability. When exact
integration is used, stability of the coupled method follows directly from the
variational form. That said in practice, when the elements are curved, exact
integration is rarely used and variational crimes can cause instability (e.g.,
positive real parts to the semidiscrete method's eigenvalue spectrum). In recent
years skew-symmetry has been found to robustify high-order methods when the
quadrature is inexact;
see~\cite{ChanWangModaveRemacleWarburton2016jcp,FisherCarpenterNordstromYamaleevSwanson2012jcp,Gassner2013,KoprivaGassner2014SISC,KozdonDunhamNordstrom2013,Nordstrom2006,Warburton2013}.
The advantage of the skew-symmetric form is that the divergence
theorem (i.e., integration-by-parts) need not hold discretely for the stability analysis to
follow, and the volume and surface stability are decoupled. The skew-symmetric
form has also been shown to be useful even on affine elements when the mesh is
nonconforming~\cite{FriedrichEtAl2017,KozdonWilcox2018}. The particular
skew-symmetric formulation we used is based on~\cite{KozdonWilcox2018}; see
Section~\ref{sec:form}. An important property of the formulation is that it is
provably stable even with inexact quadrature; see Section~\ref{sec:semi:disc}.

The compact support of the GD method basis functions means that the mass matrix
will be banded, and thus when used in a finite element framework the inverse can
be applied efficiently through a banded Cholesky factorization.  Furthermore,
when the element Jacobian determinant is constant (e.g., the element is affine)
the mass matrix has a tensor product structure allowing a dimension-by-dimension
application of the inverse along the grid lines.  That said, when curved
elements (or mapped geometries) are used the tensor product structure is lost
and, in the na\"{\i}ve implementation, the full mass matrix must be factored;
since the basis functions have compact support the mass matrix remains banded
but the size and bandwidth are increased. Having to factor the curved mass
matrix increases the initialization cost and time-stepping
of the method as well as the storage
requirements. To address this, we propose using a weight-adjusted mass
matrix~\cite{ChanHewettWarburton2017_curviWADG}. In this approach the inverse of
the Jacobian weighted mass matrix is approximated in a manner that only requires
the inverse of the reference mass matrix and the application of the mass matrix
weighted with the reciprocal of the Jacobian determinant. We do this both for
the GD elements and the curved simplicial elements, and when applied in the GD
method the tensor product structure can be exploited. Details for the
weight-adjusted approach are given Section~\ref{sec:WADG}.

Another important computational question is the storage of the metric terms. As
will be seen when the metric terms are non-constant a set of GD quadrature nodes
must be used. This grid of quadrature nodes is fine compared with the GD grid of
degrees of freedom since many quadrature nodes are needed over each GD subcell.
If the metric terms are stored at the quadrature nodes then the storage
requirements are drastically increased, thus we propose computing the
$L^{2}$-projection of the metric terms to the GD approximation space.
Furthermore, we also show how the metric terms can be computed across
non-conforming curved interfaces so that the scheme is both conservative and
constant preserving; see Section~\ref{sec:GD:metric}.

In the remainder of the text, we will use linear acoustics as a model problem
with constant material properties. The approach can be extended to any linear
equation with has a skew-symmetric formulation.

\begin{figure}
  \centering
  \includegraphics{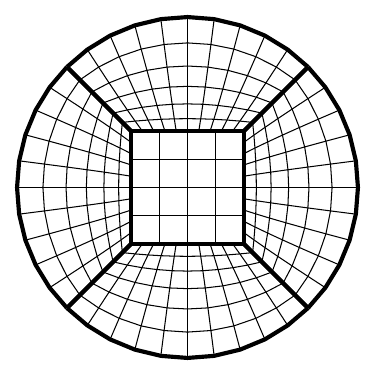}
  \includegraphics{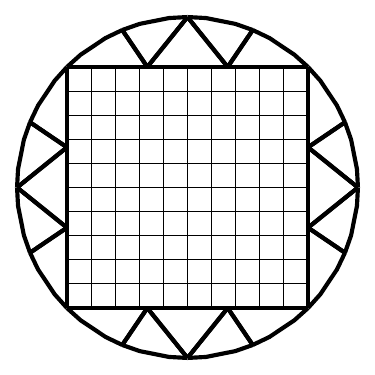}
  \caption{Example domain decompositions for two approaches to handling complex
    geometries with GD methods.
    The thick lines show the boundaries of the elements and the thin lines show the GD
    grid lines (i.e., subcell boundaries).
    (left) A decomposition with curved GD elements.
    (right) A decomposition with an affine GD element and several
    curved simplicial elements.\label{fig:decomp}}
\end{figure}

\section{Model Problem and Skew Symmetric Discontinuous Galerkin
Scheme}\label{sec:form}
For our model problem, we consider the two-dimensional, constant coefficient
acoustic wave equation in first order form:
\begin{align}
  \label{eqn:strong}
  \frac{\partial p}{\partial t} + \nabla\cdot\vec{v} = 0,
  \quad
  \frac{\partial \vec{v}}{\partial t} + \nabla p = \vec{0},
  \quad\mbox{in}\quad
  \Omega,
\end{align}
where $p$ is the pressure and $\vec{v} = {[v_{x}~v_{y}]}^{T}$ is the
particle velocity in the $x$ and $y$ directions.  Here $\Omega \subset
\mathbb{R}^{2}$ is the domain of the problem. For simplicity, only periodic and
the zero normal velocity boundary condition,
\begin{align}
  \label{eqn:bc}
  \vec{n} \cdot \vec{v} = 0
  \quad\mbox{on}\quad
  \partial \Omega,
\end{align}
are considered. In~\eref{eqn:bc} $\vec{n}$ is the outward pointing unit normal
to $\partial \Omega$.

The discontinuous Galerkin (DG) discretization of the acoustic wave
equation begins with a partition of the domain into a set, $\EE$, of
non-overlapping elements such that $\Omega = \bigcup_{e\in\EE} e$.
Following~\cite{KozdonWilcox2018}, we allow the elements to be non-conforming by
patching them together with a mortar, $\Gamma = \bigcup_{e\in\EE} \partial e$,
where $\partial e$ is the boundary of element $e$. The mortar is partitioned
into a set, $\GG$, of non-overlapping, one-dimensional mortar elements such that
$\Gamma = \bigcup_{g\in\GG} g$ and each mortar element intersects at most the
boundary of two elements.
The set of mortar elements that a volume element $e \in \EE$ connects to is
\begin{align}
  \GG^{e} =
  \left\{\,g \in \GG \mid g \cap \partial e \ne \emptyset \,\right\}.
\end{align}

For each element, $e\in\EE$, we assume there exists a diffeomorphic mapping
from the reference element $\hat{e}$ to $e$. We let $\{x^{e}(r,s), y^{e}(r,s)\}$
be this mapping and $\{r^{e}(x,y), s^{e}(x,y)\}$ be its inverse.
Likewise, for each mortar element, $g\in\GG$, we assume there exists a
diffeomorphic mapping from the reference element $\hat{g}$ to $g$. We let
$\{x^{g}(r), y^{g}(r)\}$ be this mapping and $\{r^{g}(x,y)\}$ be its inverse.

Let $V^{e}_{h}$ be a finite-dimensional approximation space over $\hat{e}$.
Below, depending on the type of element, this is the span of a
Galerkin difference (GD) basis, described in Section~\ref{sec:GDbasis}, or the span
of polynomials up to a given order.

The division of the mortar and the approximation space for each reference mortar
element are chosen to represent the trace of functions from
$V^{e}_{h}$ exactly.
Depicted in Figure~\ref{fig:mortar} is how the mortar space would be partitioned
when two GD elements are connected (left panel) and a single GD
element connects to several simplicial elements (right panel).
The thick lines denote interfaces between elements and the thin lines
denote subcells of the larger GD element that support a single tensor product
polynomial in the reference space (see Section~\ref{sec:GDbasis} for more detail on
the GD basis).
Each side of a mortar element connects to either a single GD subcell or
simplicial element.
The approximation space for the reference mortar element, $\hat{g}$, is
the space of one-dimensional polynomials of degree less than or equal to $n$,
denoted $\mathbb{P}_{n}$, with $n$ being the maximum polynomial order on the
two-sides of the mortar.

A skew-symmetric discontinuous Galerkin semi-discretization of \eref{eqn:strong}
is: For each element $e\in\EE$ find a $p \in [0, T] \times V_{h}^{e}$ and
$\vec{v} \in {\left([0, T] \times V^{e}_{h}\right)}^{2}$ such that
\begin{align}
  \label{eqn:var:p}
  \int_{\hat{e}}
  \left( J \phi \frac{\partial p}{\partial t}
  + J\phi \nabla\cdot\vec{v}\right)
  &= -\sum_{g\in\GG^{e}} \int_{\partial\hat{g}} S_{J}^{g} \phi^{-}
  \left(v_{n}^{*} - v_{n}^{-}\right),\\
  \label{eqn:var:v}
  \int_{\hat{e}}
  \left(J {\vec{\omega}}^{T} \frac{\partial \vec{v}}{\partial t}
  -  J \left(\nabla\cdot\vec{\omega}\right) p\right)
  &= -\sum_{g\in\GG^{e}} \int_{\partial\hat{g}} S_{J}^{g} \omega^{-}_{n} p^{*},
\end{align}
for all $\phi \in V^{e}_{h}$ and $\vec{\omega} \in
{\left(V^{e}_{h}\right)}^{2}$.  Here $\phi^{-}$ denotes the trace of the volume
element field $\phi$ on the mortar element $g$. Likewise $v_{n}^{-} =
\vec{n}^{-}\cdot\vec{v}^{-}$ and $\omega_{n}^{-} =
\vec{n}^{-}\cdot\vec{\omega}^{-}$ where $\vec{n}^{-}$ is
the outward pointing normal of element $e$. The volume and surface Jacobian
determinants are
$J$ and $S_{J}^{g}$, respectively.  The terms $p^{*}$ and $v_{n}^{*}$ are the
numerical fluxes, which couple the solution across the element interfaces and
are defined as
\begin{align}
  \label{eqn:flux}
  p^{*} &= \avg{p} - \frac{\alpha}{2}\jmp{v_{n}},&
  v_{n}^{*} &= \avg{v_{n}} - \frac{\alpha}{2}\jmp{p},
\end{align}
for some constant $\alpha \ge 0$.  Here $\avg{p} = (p^{+} + p^{-}) / 2$ is the
average and $\jmp{p} = p^{+} - p^{-}$ is the jump across the mortar with $p^{-}$
denoting the trace of $p$ from $e$ on $g$ and $p^{+}$ the trace
from the other element connected on $g$. At the outer boundaries, the zero
velocity boundary condition is enforced by setting $p^{+} = p^{-}$ and
$v_{n}^{+} = - v_{n}^{-}$.

\begin{figure}
  \centering
  \includegraphics{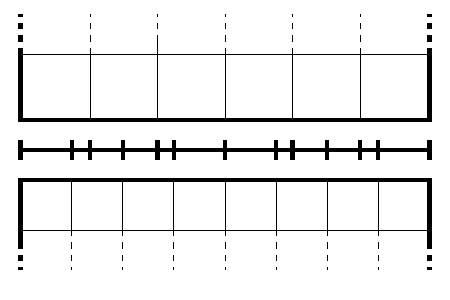}
  \includegraphics{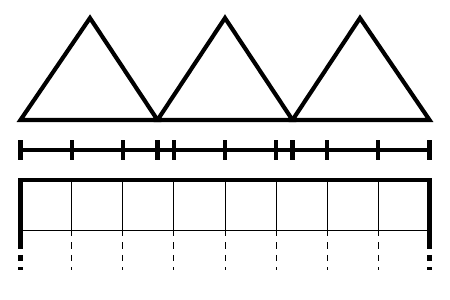}
  \caption{    Mortar elements that would be used to connect two GD elements (left)
    and a GD element with several simplicial elements (right).
    The elements have been separated vertically to show the mortar elements,
    which are represented by the hash marks on the line between the elements.
    In both panels the dashed line indicates that the GD element has been
    truncated and the thin lines show the GD grid lines
    (i.e., subcell boundaries).
    In the right panel only elements that connect to the GD element are shown
    and mortar elements between simplicial elements are not shown.\label{fig:mortar}}
\end{figure}

\section{Semi-Discrete Scheme}\label{sec:semi:disc}
In this section we define a quadrature-based discretization of
\eref{eqn:var:p}--\eref{eqn:var:v} as well as introduce the matrix vector form
of the problem \eref{eqn:semi:p}--\eref{eqn:semi:vy}. To start with we leave the
notation fairly general so that it includes both the (yet to be defined) GD
elements and simplicial elements; details of the specific
choices for each of these operators is given later in the text for the GD basis.
We refer the reader to Hesthaven and Warburton~\cite{HesthavenWarburton2008} for
a detailed description of the polynomial basis for simplicial elements. In what follows
we describe the operators from the perspective of a single element. These
operators can vary from element to element depending on the element type and
metric terms.  That said, in order to simplify the notation we suppress any
superscripts or subscripts to denote element unless this is needed for clarity,
for instance when discussing mortar elements.

Let ${\{\phi_{k}\}}_{k=0}^{N} \in V^{e}_{h}$ be a basis for the reference space
of element $e$. Thus, any function $q \in V^{e}_{h}$ can be represented as
\begin{align}
  q = \sum_{k=0}^{N} q_{k} \phi_{k} = \vec{\phi}^{T} \vec{q},
\end{align}
with $\vec{q} = {[q_{0}\;\dots\;q_{N}]}^{T}$ and $\vec{\phi} =
{[\phi_{0}\;\dots\;\phi_{N}]}^{T}$. We approximate integrals over reference
element $\hat{e}$ with the $N_{q}$-point, positive weight, interpolatory
quadrature rule
\begin{align}
  \int_{\hat{e}} f \approx \sum_{k=1}^{N_q} \omega_{k} f(r_{k},s_{k}),
\end{align}
where the quadrature weights ${\{\omega_{k}\}}_{k=1}^{N_{q}}$ are strictly
positive and quadrature node locations ${\{(r_{k},s_{k})\}}_{k=1}^{N_{q}}$ are
given for the reference element. Inner products of $p,q \in V^{e}_{h}$ weighted
by $J \in
L^{2}(\hat{e})$ can then be approximated as
\begin{align}
  \label{eqn:approx:inner}
  \int_{\hat{e}} J pq \approx \sum_{k=1}^{N_q} \omega_{k} J(r_{k},s_{k})
  p(r_{k},s_{k}) q(r_{k},s_{k}) = \vec{p}^{T} \mat{L}^{T} \mat{W}\mat{J}
  \mat{L}\vec{q},
\end{align}
where $\mat{W}$ is a diagonal matrix of the quadrature weights,
$\mat{J}$ is a diagonal matrix of $J$ evaluated at the quadrature nodes, and
$\mat{L}$ interpolates functions in $V^{e}_{h}$ which have been expanded in
basis $\vec{\phi}$ to the integration nodes.  For convenience, we define the
$J$-weighted mass matrix to be
\begin{align}
  \mat{M}_{J} = \mat{L}^{T} \mat{W}\mat{J} \mat{L}.
\end{align}
Since the quadrature weights are positive and the nodes are distinct, the mass
matrix $\mat{M}_{J}$ is symmetric positive definite as long as $J > 0$ at the
quadrature nodes and $N \le N_{q}$, i.e., there are more quadrature nodes than
basis functions for the space $V^{e}_{h}$.

We let $\mat{D}_{r}$ and $\mat{D}_{s}$ be operators that differentiate functions
in $V^{e}_{h}$ that have been expanded in basis $\vec{\phi}$ in the reference
directions $r$ and $s$, respectively. These operators evaluate the derivatives
at the quadrature nodes. Letting $p,v_{x}, v_{y} \in V^{e}_{h}$ with $\vec{v} =
{[v_{x}\;v_{y}]}^{T}$ we then have the following approximation:
\begin{align}
  \int_{\hat{e}} J p \nabla\cdot\vec{v}
  &=
  \int_{\hat{e}} J p \left(\frac{\partial v_{x}}{\partial x} + \frac{\partial
  v_{y}}{\partial y}\right)
  =
  \int_{\hat{e}} J p
  \left(
  \frac{\partial r}{\partial x} \frac{\partial v_{x}}{\partial r}
  +
  \frac{\partial s}{\partial x} \frac{\partial v_{x}}{\partial s}
  +
  \frac{\partial r}{\partial y} \frac{\partial v_{y}}{\partial r}
  +
  \frac{\partial s}{\partial y} \frac{\partial v_{y}}{\partial s}
  \right)
  \notag\\
  \label{eqn:int:pgradv}
  \approx&
  \vec{p}^{T} \mat{L}^{T} \mat{W} \mat{J}
  \left(
  \mat{r}_{x} \mat{D}_{r} \vec{v}_{x}
  +
  \mat{s}_{x} \mat{D}_{s} \vec{v}_{x}
  +
  \mat{r}_{y} \mat{D}_{r} \vec{v}_{y}
  +
  \mat{s}_{y} \mat{D}_{s} \vec{v}_{y}
  \right).
\end{align}
Here $\mat{r}_{x}$ is a diagonal matrix of $\partial r/\partial x$ evaluated at
the quadrature nodes, with similar definitions for $\mat{s}_{x}$, $\mat{r}_{y}$,
and $\mat{s}_{y}$. It is convenient to define the stiffness matrices
\begin{align}
  \mat{S}_{x} =&
  \mat{L}^{T} \mat{W} \mat{J}
  \left(
  \mat{r}_{x} \mat{D}_{r}
  +
  \mat{s}_{x} \mat{D}_{s}
  \right),&
  \mat{S}_{y} =&
  \mat{L}^{T} \mat{W} \mat{J}
  \left(
  \mat{r}_{y} \mat{D}_{r}
  +
  \mat{s}_{y} \mat{D}_{s}
  \right),
\end{align}
so that we can write \eref{eqn:int:pgradv} as
\begin{align}
  \int_{\hat{e}} J p \nabla\cdot\vec{v}
  \approx
  \vec{p}^{T} \left(\mat{S}_{x} \vec{v}_{x} + \mat{S}_{y} \vec{v}_{y}\right).
\end{align}

Integrals over mortar element $g \in \GG^{e}$ are approximated using the
$N_{q}^{g}$-point, positive weight, interpolatory quadrature rule
\begin{align}
  \int_{\hat{g}} f \approx \sum_{k=1}^{N_q^{g}} \omega_{k}^{g} f(r_{k}^{g}),
\end{align}
where ${\{\omega_{k}^{g}\}}_{k=1}^{N_{q}^{g}}$ and
${\{r_{k}^{g}\}}_{k=1}^{N_{q}^{g}}$ are the surface quadrature weights and
nodes, respectively. The $S_{J}^{g}$-weighted inner product of functions
$\bar{p},\bar{q} \in \mathbb{P}_{N}$ over $\hat{g}$ is approximated by
\begin{align}
  \int_{\hat{g}} S_{J}^{g} \bar{p} \bar{q}
  \approx
  \sum_{k=1}^{N_{q}^{g}} \omega_{k}^{g} S_{J}^{g}(r_{k}^{g})
  \bar{p}(r_{k}^{g}) \bar{q}(r_{k}^{g})
  =
  \vec{\bar{p}}^{T} \mat{S}_{J}^{g} \mat{W}^{g} \vec{\bar{q}},
\end{align}
where $\mat{W}^{g}$ is a diagonal matrix of the quadrature weights,
$\mat{S}_{J}^{g}$ is a diagonal matrix of $S_{J}^{g}$ evaluated at the
quadrature nodes, and vectors $\vec{\bar{p}}$ and $\vec{\bar{q}}$ are $\bar{p}$
and $\bar{q}$ evaluated at the quadrature nodes.
Evaluation of mortar integrals involving the trace of $p, q \in
V^{e}_{h}$ on the mortar (e.g., $p^{-}$ and $q^{-}$) can be evaluated by
defining the matrix $\mat{L}^{g}$ as the interpolation matrix from $\hat{e}$ to
the quadrature nodes over $\hat{g}$. With this we then have the integral
approximation
\begin{align}
  \int_{\hat{g}} S_{J}^{g} p^{-} q^{-}
  \approx
  \vec{p}^{T} {(\mat{L}^{g})}^{T} \mat{S}_{J}^{g} \mat{W}^{g} \mat{L}^{g} \vec{q}.
\end{align}
For stability, the same surface Jacobian determinant must be used for the volume
update on both sides of the mortar. Since the mortar element space need not be a
subspace of the trace of the volume element space, the representation of the
surface Jacobian determinant on mortar element $g$ may be different than the
representation of surface Jacobian determinant on the connected face of element
$e$. For GD elements this is important, since the only functions that can be
represented exactly on both side of a nonconforming GD interface are polynomials
(as opposed to piecewise polynomials); see~\cite{KozdonWilcox2018} for a fuller
discussion of the mortar elements.

With this notation, a skew-symmetric quadrature based version of
\eref{eqn:var:p}--\eref{eqn:var:v} is then: For each $e \in \EE$ find a
$p,\;v_{x},\;v_{y} \in V^{e}_{h} {\left([0, T] \times V^{e}_{h}\right)}$ such
that
\begin{align}
  \label{eqn:semi:p}
  &\mat{M}_{J} \frac{d\vec{p}}{dt}
  + \mat{S}_{x} \vec{v}_{x} + \mat{S}_{y} \vec{v}_{y}
  = -\sum_{g\in\GG^{e}} {(\mat{L}^{g})}^{T} \mat{S}_{J}^{g} \mat{W}^{g}
  \left(\vec{v}^{*}_{n} - \vec{v}^{-}_{n}\right),\\
  \label{eqn:semi:vx}
  &\mat{M}_{J} \frac{d\vec{v_{x} }}{dt}
  - \mat{S}_{x}^{T} \vec{p}
  = -\sum_{g\in\GG^{e}} {(\mat{L}^{g})}^{T} \mat{n}_{x}^{-g} \mat{S}_{J}^{g} \mat{W}^{g}
  \vec{p}^{*},\\
  \label{eqn:semi:vy}
  &\mat{M}_{J} \frac{d\vec{v_{y} }}{dt}
  - \mat{S}_{y}^{T} \vec{p}
  = -\sum_{g\in\GG^{e}} {(\mat{L}^{g})}^{T} \mat{n}_{y}^{-g} \mat{S}_{J}^{g} \mat{W}^{g}
  \vec{p}^{*}.
\end{align}
Here $\mat{n}_{x}^{-g}$ and $\mat{n}_{y}^{-g}$ are diagonal matrices of the
components of the unit normal to mortar $g$ evaluated at the mortar element
quadrature points; the normal is defined to be outward with respect to element
$e$.  Note, it is important for stability that the normal used for the element
on the opposite side of the mortar be equal in magnitude but opposite in sign.
The normal velocity on the mortar is defined to be
\begin{align}
   \vec{v}_{n}^{-} =
   \mat{n}_{x}^{-g} \mat{L}^{g} \vec{v}_{x}
   +
   \mat{n}_{y}^{-g} \mat{L}^{g} \vec{v}_{y}.
\end{align}
The flux vectors $\vec{p}^{*}$ and $\vec{v}^{*}_{n}$ are defined by applying
\eref{eqn:flux} pointwise.

We again note that scheme \eref{eqn:semi:p}--\eref{eqn:semi:vy} is written from
the point of view of a single element $e$, and each element $e$ will have unique
solution vectors and matrices (e.g., in general the mass matrix and stiffness
matrices are different for each element).

\subsection{Energy stability}
Let the energy in element $e$ be defined as
\begin{align}
  \label{eqn:eng:elm}
  \En^{e} = \frac{1}{2}
  \left(
  \vec{p}^{T}\mat{M}_{J}\vec{p}
  +
  \vec{v}_{x}^{T}\mat{M}_{J}\vec{v}_{x}
  +
  \vec{v}_{y}^{T}\mat{M}_{J}\vec{v}_{y}
  \right).
\end{align}
Then, the energy in the whole domain is
\begin{align}
  \En = \sum_{e\in\EE}\En^{e}.
\end{align}
If $\mat{M}_{J}$ is symmetric positive definite for all $e \in \EE$, it follows
that $\En$ is a well-defined norm of the solution. With this and the
restriction that $\mat{S}_{J}^{g}$ is positive for all mortar elements, we have
the following stability result.

\begin{theorem}\label{thm:stability}
  The semidiscrete scheme \eref{eqn:semi:p}--\eref{eqn:semi:vy} satisfies the
  energy estimate $\En(t) \le \En(0)$ for $t > 0$.
\end{theorem}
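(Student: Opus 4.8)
The plan is to prove the stronger pointwise-in-time statement $\frac{d\En}{dt}\le 0$ and then integrate. The strategy is the standard energy method tailored to the skew-symmetric form: differentiate the elemental energy \eref{eqn:eng:elm}, substitute the semidiscrete equations \eref{eqn:semi:p}--\eref{eqn:semi:vy} for the time derivatives, observe that the volume (stiffness) terms cancel identically, and then show that the surviving surface terms, once assembled over the whole mesh, form a sum of nonpositive penalty contributions.

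First I would differentiate $\En^{e}$. Since $\mat{M}_{J}$ is symmetric and time independent, $\frac{d\En^{e}}{dt} = \vec{p}^{T}\mat{M}_{J}\frac{d\vec{p}}{dt} + \vec{v}_{x}^{T}\mat{M}_{J}\frac{d\vec{v}_{x}}{dt} + \vec{v}_{y}^{T}\mat{M}_{J}\frac{d\vec{v}_{y}}{dt}$. Replacing the three $\mat{M}_{J}$-weighted time derivatives using \eref{eqn:semi:p}--\eref{eqn:semi:vy}, the volume part becomes $-\vec{p}^{T}\mat{S}_{x}\vec{v}_{x} - \vec{p}^{T}\mat{S}_{y}\vec{v}_{y} + \vec{v}_{x}^{T}\mat{S}_{x}^{T}\vec{p} + \vec{v}_{y}^{T}\mat{S}_{y}^{T}\vec{p}$. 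Each summand is a scalar, so $\vec{v}_{x}^{T}\mat{S}_{x}^{T}\vec{p} = \vec{p}^{T}\mat{S}_{x}\vec{v}_{x}$ and likewise in $y$, and the volume terms cancel exactly. This algebraic cancellation---the defining feature of the skew-symmetric formulation---needs no discrete divergence theorem ($\mat{S}_{x}+\mat{S}_{x}^{T}$ need not reproduce any surface operator), which is exactly why the estimate survives inexact quadrature and variational crimes.

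Next I would reduce the surface terms. Writing $\vec{p}^{-}=\mat{L}^{g}\vec{p}$, using the definition of $\vec{v}_{n}^{-}$, and abbreviating the diagonal, strictly positive weight $\mat{H}^{g}=\mat{S}_{J}^{g}\mat{W}^{g}$ (positive because $S_{J}^{g}>0$ and the quadrature weights are positive), the contribution of element $e$ on a mortar $g$ collapses to $-(\vec{p}^{-})^{T}\mat{H}^{g}(\vec{v}_{n}^{*}-\vec{v}_{n}^{-}) - (\vec{v}_{n}^{-})^{T}\mat{H}^{g}\vec{p}^{*}$. I would then add the two contributions an interior mortar receives from its two adjacent elements. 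Here the orientation conventions are essential: the opposite element carries the oppositely signed normal, so its normal-velocity trace and $\vec{v}_{n}^{*}$ change sign relative to the fixed mortar orientation while $\vec{p}^{*}$ is single valued, and the common $\mat{S}_{J}^{g}$ makes $\mat{H}^{g}$ identical on both sides. Expanding the fluxes \eref{eqn:flux} into averages and jumps, the average--average and mixed average--jump contributions cancel and only $-\frac{\alpha}{2}\jmp{p}^{T}\mat{H}^{g}\jmp{p} - \frac{\alpha}{2}\jmp{v_{n}}^{T}\mat{H}^{g}\jmp{v_{n}}$ remains.

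Finally I would treat the boundary mortars with the ghost states $p^{+}=p^{-}$, $v_{n}^{+}=-v_{n}^{-}$, which force $\jmp{p}=0$ and $\avg{v_{n}}=0$ and collapse the single-sided contribution to $-\alpha(\vec{v}_{n}^{-})^{T}\mat{H}^{g}\vec{v}_{n}^{-}$; periodic faces are handled exactly as interior mortars. Summing over all mortars then gives $\frac{d\En}{dt}\le 0$ because $\alpha\ge 0$ and each $\mat{H}^{g}$ is positive, and integrating in time yields $\En(t)\le\En(0)$. I expect the main obstacle to be the surface bookkeeping at a shared mortar: one must track the normal sign flip together with the induced flux relations on the two sides so that the nondissipative average terms cancel and only the dissipative jump penalties survive; the volume cancellation, by contrast, is immediate from the transpose structure of the stiffness matrices.
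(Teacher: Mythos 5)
Your proposal is correct and follows essentially the same route as the paper's proof in Appendix~A: differentiate the elemental energy, cancel the volume terms via the transpose structure of $\mat{S}_{x}$ and $\mat{S}_{y}$, and then show mortar by mortar that the assembled surface contributions reduce to $-\alpha(\vec{v}_{n}^{-})^{T}\mat{S}_{J}^{g}\mat{W}^{g}\vec{v}_{n}^{-}$ at boundaries and $-\frac{\alpha}{2}\jmp{\vec{p}}^{T}\mat{S}_{J}^{g}\mat{W}^{g}\jmp{\vec{p}}-\frac{\alpha}{2}\jmp{\vec{v}_{n}}^{T}\mat{S}_{J}^{g}\mat{W}^{g}\jmp{\vec{v}_{n}}$ at interior interfaces. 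The only cosmetic difference is that you spell out the volume cancellation explicitly, which the paper leaves implicit.
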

\begin{proof}
  See Appendix~\ref{app:thm:stability}.
\end{proof}

\subsection{Weight-Adjusted DG}\label{sec:WADG}
One of the computational challenges with \eref{eqn:semi:p}--\eref{eqn:semi:vy}
is that $\mat{M}_{J}$ must be inverted. In practice this means that different
factors of $\mat{M}_{J}$ will be needed for each element which drastically
increases the storage costs of the method. As will be seen, when the GD basis
is used on affine elements the mass matrix has a tensor product
(dimension-by-dimension) structure which allows for the efficient application of
its inverse.  However,
when the Jacobian determinant is non-constant this tensor product structure is
lost and the full two-dimensional mass matrix must be factored.

To overcome this computational challenge, we propose using the weight-adjusted
approach of  Chan, Hewett, and
Warburton~\cite{ChanHewettWarburton2017_curviWADG}. In this approach the mass
matrix is approximated as $\mat{M}_{J} \approx \mat{M}
\mat{M}_{1/J}^{-1}\mat{M}$.  To apply the inverse of this approximation, one
only needs to multiply by the mass matrix weighted by the $1/J$ and the inverse
of the reference element mass matrix $\mat{M}$. For GD elements the
action of $\mat{M}^{-1}$ can be efficiently applied in tensor product form;
efficiency on simplicial elements comes from the fact that they all have the
same reference mass matrix (and thus factors).

This weight-adjusted approximation of $\mat{M}_{J}$ is arrived at
by approximating multiplication by $J$ with the operator
$T^{-1}_{1/J}$:
\begin{align}
  \int_{\hat{e}}
   J \phi u
  \approx
  \int_{\hat{e}}
  \phi T^{-1}_{1/J} u.
\end{align}
Let $u \in V^{e}_{h}$, then $T^{-1}_{1/J} u \in V^{e}_{h}$ is defined by
\begin{align}
  \label{eqn:wadg:proj}
  \int_{\hat{e}}
  \phi \frac{1}{J} T^{-1}_{1/J} u
  =
  \int_{\hat{e}}
  \phi u,
\end{align}
for all $\phi \in V^{e}_{h}$. To see how this gives rise to the weight-adjusted
mass matrix above, we first define $u_{J} = T^{-1}_{1/J} u$ which allows us to
write
\begin{align}
  \label{eqn:wadg:mass}
  \int_{\hat{e}}
  \phi T^{-1}_{1/J} u
  =
  \vec{\phi}^{T} \mat{M} \vec{u}_{J}.
\end{align}
The quantity $u_{J}$ is calculated using \eref{eqn:wadg:proj}:
\begin{align}
  \vec{\phi}^{T} \mat{M}_{1/J} \vec{u}_{J} =
  \vec{\phi}^{T} \mat{M} \vec{u}.
\end{align}
Since $\phi \in V^{e}_{h}$ is arbitrary we then have that
\begin{align}
  \vec{u}_{J}
  =
  \mat{M}_{1/J}^{-1} \mat{M} \vec{u}.
\end{align}
Substituting this back into \eref{eqn:wadg:mass}, gives the weight-adjusted mass
matrix:
\begin{align}
  \int_{\hat{e}}
  \phi T^{-1}_{1/J} u
  =
  \vec{\phi}^{T} \mat{M}
  \mat{M}_{1/J}^{-1}
  \mat{M} \vec{u}.
\end{align}
Typically $\mat{M}_{1/J}$ will not be an exact mass matrix since
both $J$ and $1/J$ will be approximated and inexact quadrature is used.
With the assumption that $\mat{M}_{1/J}$ is symmetric positive definite, the
stability properties of the scheme remain unchanged.

We thus have the following corollary to Theorem~\ref{thm:stability}
\begin{corollary}
  The semidiscrete scheme \eref{eqn:semi:p}--\eref{eqn:semi:vy} with the
  weight-adjusted mass matrix satisfies the energy estimate $\En(t) \le
  \En(0)$ for $t > 0$.
\end{corollary}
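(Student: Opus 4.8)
The plan is to observe that the energy estimate of Theorem~\ref{thm:stability} uses nothing about the mass matrix beyond its symmetry and positive-definiteness, and then to verify that the weight-adjusted mass matrix $\widetilde{\mat{M}}_{J} = \mat{M}\mat{M}_{1/J}^{-1}\mat{M}$ inherits both properties. Throughout, the element energy $\En^{e}$ in \eref{eqn:eng:elm} is understood with $\mat{M}_{J}$ replaced by $\widetilde{\mat{M}}_{J}$, since the scheme now advances the solution with $\widetilde{\mat{M}}_{J}$ in place of $\mat{M}_{J}$ in \eref{eqn:semi:p}--\eref{eqn:semi:vy}.

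First I would retrace the energy identity. Differentiating $\En^{e} = \tfrac{1}{2}(\vec{p}^{T}\widetilde{\mat{M}}_{J}\vec{p} + \vec{v}_{x}^{T}\widetilde{\mat{M}}_{J}\vec{v}_{x} + \vec{v}_{y}^{T}\widetilde{\mat{M}}_{J}\vec{v}_{y})$ in time and using the symmetry and time-independence of $\widetilde{\mat{M}}_{J}$, each contribution takes the form $\vec{p}^{T}\widetilde{\mat{M}}_{J}\,\tfrac{d\vec{p}}{dt}$. Substituting the semidiscrete equations replaces $\widetilde{\mat{M}}_{J}\,\tfrac{d\vec{p}}{dt}$ (and the analogous velocity products) by the stiffness-plus-flux right-hand side, so the mass matrix cancels and never appears in the resulting balance. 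The volume contributions then vanish by skew symmetry, since $\mat{S}_{x}$ enters the pressure equation and $\mat{S}_{x}^{T}$ the $v_{x}$ equation with opposite signs so that $-\vec{p}^{T}\mat{S}_{x}\vec{v}_{x} + \vec{v}_{x}^{T}\mat{S}_{x}^{T}\vec{p} = 0$ (and similarly in $y$), and the surface terms are bounded exactly as in Theorem~\ref{thm:stability} after summing over elements and mortars and imposing the boundary conditions. The key point is that this computation uses only that $\widetilde{\mat{M}}_{J}$ is symmetric and time-independent and that the same matrix appears in both the energy and the evolution equations.

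Next I would establish that $\widetilde{\mat{M}}_{J}$ is symmetric positive definite, which is the one genuine calculation and the sole place the standing assumption on $\mat{M}_{1/J}$ is used. Symmetry is immediate from the symmetry of $\mat{M}$ and $\mat{M}_{1/J}$, since ${(\mat{M}\mat{M}_{1/J}^{-1}\mat{M})}^{T} = \mat{M}\mat{M}_{1/J}^{-1}\mat{M}$. For positive-definiteness, write $\vec{x}^{T}\widetilde{\mat{M}}_{J}\vec{x} = {(\mat{M}\vec{x})}^{T}\mat{M}_{1/J}^{-1}(\mat{M}\vec{x})$; the reference mass matrix $\mat{M}$ is symmetric positive definite (the $J\equiv 1$ case of the result in Section~\ref{sec:semi:disc}) and hence invertible, so $\mat{M}\vec{x}\ne\vec{0}$ for $\vec{x}\ne\vec{0}$, while the inverse of the symmetric positive definite matrix $\mat{M}_{1/J}$ is again symmetric positive definite, giving $\vec{x}^{T}\widetilde{\mat{M}}_{J}\vec{x} > 0$. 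With $\widetilde{\mat{M}}_{J}$ shown to be symmetric positive definite, the modified $\En$ is a well-defined norm and the argument of Theorem~\ref{thm:stability} applies verbatim, yielding $\En(t)\le\En(0)$. I do not expect a substantive obstacle: the whole content of the corollary is that the energy balance is insensitive to the particular symmetric positive definite mass matrix chosen, so once the definiteness of $\widetilde{\mat{M}}_{J}$ is checked the estimate follows at no extra cost.
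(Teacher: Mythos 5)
Your proposal is correct and follows exactly the route the paper leaves implicit: the energy argument of Theorem~\ref{thm:stability} depends on the mass matrix only through its symmetry and positive definiteness, and the weight-adjusted matrix $\mat{M}\mat{M}_{1/J}^{-1}\mat{M}$ inherits both from the assumed symmetric positive definiteness of $\mat{M}_{1/J}$ and the invertibility of $\mat{M}$. The paper states this in one sentence at the end of Section~\ref{sec:WADG}; your write-up simply fills in the (correct) details.
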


\section{Galerkin Difference Basis}\label{sec:GDbasis}
Galerkin difference methods~\cite{BanksHagstrom2016JCP,
BanksHagstromJacangelo2018JCP}
are Galerkin finite elements methods built using basis functions defined on a
grid of degrees of freedom similar to a finite difference method. The key
feature of a GD basis is that the basis functions have compact support on the
grid, which results in banded element mass and stiffness matrices. We begin by
describing the GD approximation in one dimension and then discuss the
generalization to multiple dimensions.

\subsection{One-Dimensional Galerkin Difference Basis Functions}
Consider the domain $[-1,~1]$ discretized with $N+1$ grid equally spaced points.
The spacing between the grid points is $h = 2/N$ and the location of grid
point $i$ is $r_{i} = i h - 1$, for $i = 0,\dots, N$. Over each interval
$R_{i} = [r_{i},~r_{i+1}]$ a polynomial of degree $n = 2 m-1$ is
built using values at $2 m$ grid points centered around $R_{i}$; we call these
intervals subcells.

Consider a subcell $R_{i}$ sufficiently far from the boundary, i.e., $i \ge
m-1$ and $i \le N-m$. In the subcell it is natural to use a symmetric
stencil to construct the polynomial, namely values from grid points
$\{i+1-m,~i+2-m,~\dots,~i+m\}$. Thus, if $r \in R_{i}$ the GD
approximation of a function $\textup{u}$ would be
\begin{align}
  \label{eqn:ui}
  u(r) = \sum_{k = 1-m}^{m} u_{i+k}  \LL_{k}(r-ih),
\end{align}
where $u_{i+k}$ are the GD grid point values.  These values can be defined in a
number of ways, such as through interpolation $u_{i} = \textup{u}(r_{i})$ or an
$L^{2}$ projection of $\textup{u}$ onto the GD approximation space. Here,
$\LL_{k}(r)$ is the $n$th order Lagrange interpolating polynomial that satisfies
$\LL_{k}(jh) = \delta_{jk}$ for $j = 1-m, \dots, m$.
We say that a function of the form~\eref{eqn:ui} is in the space $\GD_{N,n}$
where $n = 2 m-1$.
This construction of the GD approximation implies that it is continuous and
its derivative is discontinuous between subcells.

Near the boundary there are several options for how to define the approximation.
One option is to use \eref{eqn:ui} but allow some degrees of
freedom to be outside of the domain. Namely, the left-most degree of freedom
would be $r_{1-m} = -1 - (m-1)h$ and the right-most $r_{N+m-1} = 1 +
(m-1)h$. This is the so-called \emph{ghost basis}
method~\cite{BanksHagstrom2016JCP}. Another option is to bias the
stencil toward the interior and use a non-symmetric stencil to construct the
approximation near the boundary. This can be done by either modifying the
interpolation formula for $u(r)$ near the boundary or by using~\eref{eqn:ui}
near the boundary with points outside the domain filled using extrapolation.
This second approaches is called the \emph{extrapolation}
method~\cite{BanksHagstrom2016JCP}. A third option would be to use a mixture of
the two methods with some nodes outside the domain being ghost basis nodes and
other nodes being extrapolated.

\begin{figure}
  \centering
  \includegraphics{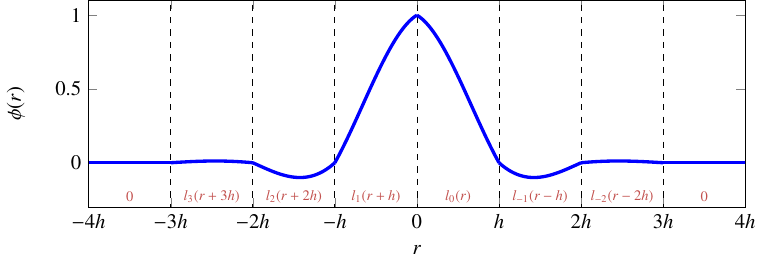}
  \caption{Example of $\phi(r)$ for $n = 5$ ($m = 3$). The equations at the base
    of the figure give the Lagrange polynomial used over the interval, which are
  denoted by the dashed lines.\label{fig:phi}}
\end{figure}
Regardless of whether the ghost basis or extrapolated boundary closure is used,
linearity and compactness of the interpolant allows us to define the global
interpolant as
\begin{align}
  \label{eqn:gdint}
  u = \sum_{k=1-m}^{N+m-1} u_{k} \phi_{k}.
\end{align}
Here the $k$th basis function is defined as $\phi_{k}(r) = \phi(r+1-kh)$,
where
\begin{align}
  \label{eqn:gd:basis}
  \phi(r) = \begin{cases}
    0,& \mbox{if } r \le -mh,\\
    \LL_{m  }(r +  m   h),  &\mbox{if }  -m h < r \le  -(m-1)h,\\
    \LL_{m-1}(r + (m-1)h),  &\mbox{if }  -(m-1)h < r \le -(m-2)h,\\
    \vdots\\
    \LL_{2-m}(r + (2-m)h),  &\mbox{if }  (m-2)h < r \le  (m-1)h,\\
    \LL_{1-m}(r + (1-m)h),  &\mbox{if }  (m-1)h < r \le  mh,\\
    0,& \mbox{if } mh < r.
  \end{cases}
\end{align}
The function $\phi$ has the Lagrange-like property $\phi(kh) = \delta_{0k}$
for $k = -m,\dots,m$ and is compactly supported such that $\phi(r) = 0$ for $|r|
> hm$.  An example of $\phi(r)$ for $n=5$ ($m = 3$) is shown in Figure~\ref{fig:phi}.
If vectors $\vec{\phi}$ and $\vec{u}$ are defined to be
\begin{align}
  \label{eqn:gd:stack}
  \vec{\phi}
  &=
  \begin{bmatrix}
    \phi_{1-m}\\
    \phi_{2-m}\\
    \vdots\\
    \phi_{N+m-2}\\
    \phi_{N+m-1}
  \end{bmatrix},
  &
  \vec{u}
  &=
  \begin{bmatrix}
    u_{1-m}\\
    u_{2-m}\\
    \vdots\\
    u_{N+m-2}\\
    u_{N+m-1}
  \end{bmatrix},
\end{align}
then interpolant \eref{eqn:gdint} can be rewritten as
\begin{align}
  u = \vec{\phi}^{T} \vec{u}.
\end{align}
In the case of the extrapolated boundary treatment, only a subset of the grid
points are actually stored/updated. Letting $\vec{\underbar{u}}$ be this subset,
then $\vec{u} = \mat{E}\vec{\underbar{u}}$ where $\mat{E}$ extrapolates the
boundary points and copies the interior points.

\begin{figure}
  \begin{center}
  \includegraphics{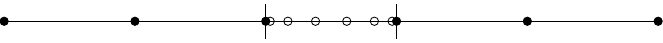}
  \end{center}
  \caption{The solid dots are the GD grid points (degrees of freedom) that would
  be used to build an interpolant over the center interval (indicated with the
  vertical lines) if $m = 3$, i.e., polynomial order over the interval is $n =
  5$. The open dots show the $6$ Legendre--Gauss nodes that would be
  used to integrate the inner product of two GD interpolants.\label{fig:grid}}
\end{figure}
In order to use the GD basis in the DG formulation, we need to be able to
both compute integrals and derivatives of GD based approximations. Since over
each subcell the GD approximation is polynomial, integrals can be simply
computed by using quadrature over each subcell. Namely, let the $N_{q}$
quadrature weights ${\{\omega_{k}\}}_{k=1}^{N_{q}}$ and points
${\{\zeta_{k}\}}_{k=1}^{N_{q}}$ approximate integrals over $[-1,\;1]$, then the
inner product of $u, v \in \GD_{N,n}$ is
\begin{align}
  \notag
  \int_{-1}^{1} v(r)u(r)\;dr
  &= \sum_{i=0}^{N-1} \int_{r_{i}}^{r_{i+1}} v(r)u(r)\;dr
  = \sum_{i=0}^{N-1} \frac{1}{h}\int_{-1}^{1}
  u\left(\frac{r_{i}+r_{i+1} + h\xi}{2}\right)v\left(\frac{r_{i}+r_{i+1} + h\xi}{2}\right)\;d\xi\\
  \label{eqn:gd:lg}
  &\approx \frac{1}{h} \sum_{i=0}^{N-1} \sum_{k=1}^{N_{q}}
  \omega_{k}
  v\left(\frac{r_{i}+r_{i+1} + h\xi_{k}}{2}\right)
  u\left(\frac{r_{i}+r_{i+1} + h\xi_{k}}{2}\right).
\end{align}
In our case we use a $2 m$-point Legendre--Gauss quadrature rule, which
integrates polynomials of degree $4 m+1$ exactly and thus the quadrature inner
product is exact (since over each subcell the degree of $uv$ is $2 n = 4 m - 2$).
An example for $n=5$ ($m = 3$) of the two grids (the GD grid and
Legendre--Gauss) quadrature grid is shown in Figure~\ref{fig:grid}.  The inner
product~\eref{eqn:gd:lg} can be written in matrix vector product form as
\begin{align}
  \int_{-1}^{1} vu
  = \vec{v}^{T}\mat{\bar{L}}^{T} \mat{\bar{W}} \mat{\bar{L}}\vec{u}
  = \vec{v}^{T}\mat{\bar{M}}\vec{u};
\end{align}
the overbar accent is used here to highlight that these are matrices for the
one-dimensional basis functions.  Here the diagonal matrix of composite
quadrature weights is $\mat{\bar{W}} = (\mat{I}_{N} \otimes \mat{\omega})/h$
with $\otimes$ being the matrix Kronecker product, $\mat{I}_{N}$ the $N \times
N$ identity matrix, and $\mat{\omega}$ the diagonal matrix of
${\{\omega_{k}\}}_{k=1}^{N_{q}}$. The Matrix $\mat{\bar{L}}$ is the
interpolation matrix from GD grid points to the intra-subcell quadrature points.
Regardless of the boundary closure, the mass matrix $\mat{\bar{M}}$ is banded
with bandwidth $m+1$.

In order to take derivatives of GD based solutions, we first note that even if
$u \in \GD_{N,n}$ in general $u' \notin \GD_{N,n}$. That said, we can exploit the
fact that over each subcell $u$ is polynomial. Namely, in order to take
derivatives we first interpolate the solution to the previous quadrature grid
and then take derivatives discretely at the quadrature grid. Namely, we define
the GD derivative matrix
\begin{align}
  \mat{\bar{D}} = h (\mat{I_{N}} \otimes \mat{D}_{q}) \mat{L}
\end{align}
where $\mat{D}_{q}$ is the polynomial derivative matrix for a function defined at
the quadrature nodes ${\{\zeta_{k}\}}_{k=1}^{N_{q}}$; scaling by $h$ arises
because the quadrature nodes are for the interval $[-1,~1]$. We note that the
stiffness matrix
\begin{align}
  \mat{\bar{S}} = \mat{\bar{L}}^T \mat{\bar{W}} \mat{\bar{D}}
\end{align}
will be exact as long as the quadrature rule used over the subcells is of order
$2 n -1$. The stiffness matrix $\mat{\bar{S}}$ has the same banded structure as
$\mat{\bar{M}}$.

\subsection{Two-Dimensional Galerkin Difference Operators}\label{sec:gd:2d}
The one-dimensional GD basis and operators can be extended to multiple dimensions
via the tensor product. Namely, let the two-dimensional domain $\hat{\Omega} =
[-1,~1] \times [-1,~1]$ be discretized using $(N_{r}+1) \times (N_{s}+1)$
interior points, so that the grid points are $(r_{i}, s_{j}) = (i h_{r} - 1, j
h_{s} - 1)$ with $h_{r} = 2 / N_{r}$ and $h_{s} = 2 / N_{s}$.  Over each subcell
$R_{ij} = [r_{i},~r_{i+1}] \times
[s_{j},~s_{j+1}]$ an $n = 2 m-1$ degree tensor product polynomial is defined
using nearby grid points. Namely, for $(r,s) \in R_{ij}$ the GD interpolation of
$\textup{u}$ would be
\begin{align}
  \label{eqn:urs}
  u(r,s) = \sum_{j=1-m}^{m}\;\sum_{l=1-m}^{m} u_{i+j,j+l} \LL_{j}(r-ih_{r})
  \LL_{l}(s-jh_{s}),
\end{align}
where $\LL_{k}$ and $\LL_{l}$ are the previously defined one-dimensional Lagrange
polynomials and $u_{i,j}$ are the GD grid points values (see~\eref{eqn:ui}); we say
that a function in the form of~\eref{eqn:urs} is in the space $\GD_{N_{r} \times
N_{s},n} = \GD_{N_{r},n}
\times \GD_{N_{s},n}$.  With this tensor product form, the approximation over each
cell $R_{ij}$ is constructed using a box of ${(2 m)}^{2}$ grid points near the cell,
grid points $(r_{i+1-m},r_{j+1-m})$ through $(r_{i+m}, s_{j+m})$. Using the
previously defined GD basis functions $\phi_{k}$, see \eref{eqn:gdint} and
\eref{eqn:gd:basis}, the approximation over the whole domain can be written as
\begin{align}
  u(r,s) = \sum_{i = 1-m}^{N_{r}+m-1} \sum_{j = 1-m}^{N_{s}+m-1} u_{i,j}
  \phi_{i}(r;\;h_{r}) \phi_{j}(s;\;h_{s});
\end{align}
here the parameter $h$ has been added to $\phi_{i}(r;\;h)$ since each dimension
is allowed to have a different grid spacing.  In two dimensions, the grid function is
denoted with the vector
\begin{align}
  \vec{u} =
  \begin{bmatrix}
    u_{-g,-g} & u_{-g,-g+1} & u_{-g,-g+2} & \cdots & u_{N_{r}+g,N_{s}+g}
  \end{bmatrix}^{T}.
\end{align}
With this, the interpolant can be written as
\begin{align}
  u(r,s) = {\left(\vec{\phi}_{s}(s) \otimes \vec{\phi}_{r}(r)\right)}^{T}
  \vec{u},
\end{align}
with $\otimes$ being the matrix Kronecker product and $\vec{\phi}_{r}$ and
$\vec{\phi}_{s}$ being the stacking of basis functions
${\{\phi_{k}(r;\;h_{r})\}}_{k=1-m}^{N_{r}+m-1}$ and
${\{\phi_{k}(r;\;h_{s})\}}_{k=1-m}^{N_{s}+m-1}$.

The two-dimensional operators needed to define the DG scheme over a GD element
are then
\begin{align}
  \mat{W} &= \mat{\bar{W}}_{s} \otimes \mat{\bar{W}}_{r},&
  \mat{L} &= \mat{\bar{L}}_{s} \otimes \mat{\bar{L}}_{r},\\
  \mat{D}_{r} &= \mat{\bar{L}}_{s} \otimes \mat{\bar{D}}_{r},&
  \mat{D}_{r} &= \mat{\bar{D}}_{s} \otimes \mat{\bar{L}}_{r},
\end{align}
where the subscripts $r$ and $s$ on the one-dimensional GD matrices indicate
that these are define with respect to grids of size $N_{r}$ and $N_{s}$,
respectively.

The GD reference mass matrix is
\begin{align}
  \mat{M} &= \mat{L}^{T} \mat{W} \mat{L}
  =
  (\mat{\bar{L}}_{s}^{T} \mat{\bar{W}}_{s} \mat{\bar{L}}_{s})
  \otimes
  (\mat{\bar{L}}_{r}^{T} \mat{\bar{W}}_{r} \mat{\bar{L}}_{r})
  =
  \mat{\bar{M}}_{s}
  \otimes
  \mat{\bar{M}}_{r}.
\end{align}
Since the reference mass matrix has a tensor product structure its inverse will
as well:
\begin{align}
  \mat{M}^{-1}
  =
  \mat{\bar{M}}_{s}^{-1}
  \otimes
  \mat{\bar{M}}_{r}^{-1}.
\end{align}
This means that when $\mat{M}^{-1}$ is applied, it can be done
dimension-by-dimension using a banded Cholesky factorization of
$\mat{\bar{M}}_{s}$ and $\mat{\bar{M}}_{r}$. Unfortunately, when the
element is not affine the Jacobian determinant weighted mass matrix does not
have this tensor product structure and the full factorization would be required
to apply $\mat{M}_{J}^{-1}$, thus motivating the use of the weight-adjusted
methodology.

\subsection{Construction of Metric Terms}\label{sec:GD:metric}
In the DG scheme~\eref{eqn:semi:p}--\eref{eqn:semi:vy} metric terms are required
at the quadrature nodes. Of course one option is to store the metric terms (or
their approximation) at the quadrature nodes. In the case of GD though, this
greatly increases the storage since the ratio of quadrature nodes to GD grid
points is $4 m^2$ in two-dimensions (assuming that the basis of the
one-dimensional quadrature is a $2 m$-point Legendre--Gauss quadrature rule).
Thus, for a practical implementation of GD, we suggest storing the volume metric
terms at the GD interpolation nodes and interpolating them on-the-fly to the
quadrature nodes; of course another option, if the exact metric terms were
available, would be to compute them on-the-fly at the quadrature nodes. The
following construction, except where noted, holds for both GD and
simplicial elements.

Let $x, y \in V^{e}_{h}$, we then define the metric derivatives as the
$L^2$-projection of the exact derivatives into the approximation space.  For
instance, we let the function $x_r \in V^{e}_{h}$ be such that
\begin{align}
  \label{eqn:xr:int}
  \int_{-1}^{1}\int_{-1}^{1} v \left(x_{r} - \frac{dx}{dr}\right) = 0
\end{align}
for all $v \in V^{e}_{h}$; an analogous construction is
used to define $x_s, y_r, y_s \in V^{e}_{h}$.
In matrix-vector form all of the metric derivatives are then
\begin{align}
  \label{eqn:xr}
  \vec{x}_{r}
  &=
  \mat{M}^{-1} \mat{L}^{T} \mat{W} \mat{D}_{r} \vec{x}
  ,&
  \vec{x}_{s}
  &=
  \mat{M}^{-1} \mat{L}^{T} \mat{W} \mat{D}_{s} \vec{x}
  ,\\
  \vec{y}_{r}
  &=
  \mat{M}^{-1} \mat{L}^{T} \mat{W} \mat{D}_{r} \vec{y}
  ,&
  \label{eqn:ys}
  \vec{y}_{s}
  &=
  \mat{M}^{-1} \mat{L}^{T} \mat{W} \mat{D}_{s} \vec{y}.
\end{align}
For GD elements these reduce to
\begin{align}
  \vec{x}_{r}
  &=
  \left(\mat{I}_{N_{s}}
  \otimes
  \left(\mat{\bar{M}}_{r}^{-1} \mat{\bar{L}}_{r}^{T} \mat{\bar{W}}_{r}
  \mat{\bar{D}}_{r}\right)\right) \vec{x}
  ,&
  \vec{x}_{s}
  &=
  \left(\mat{\bar{M}}_{s}^{-1} \mat{\bar{L}}_{s}^{T} \mat{\bar{W}}_{s}
  \mat{\bar{D}}_{s}\right)
  \otimes
  \left(\mat{I}_{N_{r}}
  \right) \vec{x},\\
  \vec{y}_{r}
  &=
  \left(\mat{I}_{N_{s}}
  \otimes
  \left(\mat{\bar{M}}_{r}^{-1} \mat{\bar{L}}_{r}^{T} \mat{\bar{W}}_{r}
  \mat{\bar{D}}_{r}\right)\right) \vec{y},&
  \vec{y}_{s}
  &=
  \left(\mat{\bar{M}}_{s}^{-1} \mat{\bar{L}}_{s}^{T} \mat{\bar{W}}_{s}
  \mat{\bar{D}}_{s}\right)
  \otimes
  \left(\mat{I}_{N_{r}}
  \right) \vec{y},
\end{align}
which means that for GD these calculations can be performed along the grid
lines. We then define the metric terms needed at the quadrature nodes as the
interpolation of these metric derivatives. Namely, we define
\begin{align}
  \label{eqn:Jrx}
  \mat{J}\mat{r}_{x} &= \mbox{diag}\left( \mat{L}\vec{y}_{s}\right),&
  \mat{J}\mat{r}_{y} &= \mbox{diag}\left(-\mat{L}\vec{x}_{s}\right),&
  \mat{J}\mat{s}_{x} &= \mbox{diag}\left(-\mat{L}\vec{y}_{r}\right),&
  \mat{J}\mat{s}_{y} &= \mbox{diag}\left( \mat{L}\vec{x}_{r}\right),
\end{align}
where the operator $\mbox{diag}(\cdot)$ turns a vector into a diagonal matrix.
When needed, the Jacobian determinant can be computed from these interpolated
values as:
\begin{align}
  \label{eqn:J:approx}
  \mat{J} = \left(\mat{J}\mat{r}_{y}\right) \left(\mat{J}\mat{s}_{x}\right) -
  \left(\mat{J}\mat{r}_{x}\right) \left(\mat{J}\mat{s}_{y}\right).
\end{align}

An important property of our approach to computing the metric terms is that we
preserve discretely the divergence theorem, which will enable us to show that
the scheme can be made both conservative and constant preserving.
\begin{theorem}\label{thm:GD:div}
  An element with metric terms computed using~\eref{eqn:Jrx}
  with~\eref{eqn:xr}--\eref{eqn:ys} satisfies
  \begin{align}
    \vec{1}^{T} \mat{S}_{x} \vec{v}_{x} + \vec{1}^{T} \mat{S}_{y} \vec{v}_{y}
    =
    \int_{\hat{\partial e}} S_{J} \left(n_{x} v_{x} + n_{y} v_{y}\right),
  \end{align}
  for all $v_{x}, v_{y} \in V^{e}_{h}$ where the surface
  Jacobian $S_{J}$ and outward normal ${[n_{x}\;n_{y}]}^{T}$ are calculated
  using $x, y \in V^{e}_{h}$; here $\vec{1}$ is the vector
  of ones.
\end{theorem}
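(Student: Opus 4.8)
The plan is to reduce the left-hand side to a pair of reference-element volume integrals of the projected metric derivatives, integrate by parts exactly, show the resulting interior remainder vanishes because of a discrete metric identity, and identify what is left with the surface term.

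First I would use that the GD basis (and the nodal simplicial basis) reproduces constants, so $\mat{L}\vec{1}$ is the vector of ones at the quadrature nodes and $\vec{1}^{T}\mat{L}^{T} = {(\mat{L}\vec{1})}^{T}$. Writing $\mat{S}_{x}$ with the metric factors of \eref{eqn:Jrx}, namely $\mat{J}\mat{r}_{x} = \mathrm{diag}(\mat{L}\vec{y}_{s})$ and $\mat{J}\mat{s}_{x} = \mathrm{diag}(-\mat{L}\vec{y}_{r})$, each contraction collapses (the matrices being diagonal) to a weighted quadrature sum of the form ${(\mat{L}\vec{y}_{s})}^{T}\mat{W}\mat{D}_{r}\vec{v}_{x}$. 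Because on each subcell the integrand is a product of a degree-$n$ and a degree-$(n-1)$ polynomial, the $2m$-point Legendre--Gauss rule is exact, so $\vec{1}^{T}\mat{S}_{x}\vec{v}_{x} = \int_{\hat{e}}(y_{s,h}\partial_{r}v_{x} - y_{r,h}\partial_{s}v_{x})$, where $y_{r,h},y_{s,h}\in V^{e}_{h}$ are the projected derivatives of \eref{eqn:xr}--\eref{eqn:ys}; the analogous reduction holds for $\vec{1}^{T}\mat{S}_{y}\vec{v}_{y}$ with $x_{r,h},x_{s,h}$.

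Next I would integrate by parts on the reference element. Since $y_{s,h}v_{x}$ is a product of two $C^{0}$, piecewise-polynomial functions it is itself $C^{0}$, so $\int_{\hat{e}}\partial_{r}(y_{s,h}v_{x}) = \int_{\partial\hat{e}}\hat{n}_{r}y_{s,h}v_{x}$ with no interior subcell-interface contributions; this is precisely where the $C^{0}$ continuity of the GD basis is used. Collecting the four terms gives
\begin{multline*}
  \vec{1}^{T}\mat{S}_{x}\vec{v}_{x} + \vec{1}^{T}\mat{S}_{y}\vec{v}_{y}
  = \int_{\partial\hat{e}}\big[(\hat{n}_{r}y_{s,h}-\hat{n}_{s}y_{r,h})v_{x}
     + (\hat{n}_{s}x_{r,h}-\hat{n}_{r}x_{s,h})v_{y}\big]\\
  + \int_{\hat{e}}\big[v_{x}(\partial_{s}y_{r,h}-\partial_{r}y_{s,h})
     - v_{y}(\partial_{s}x_{r,h}-\partial_{r}x_{s,h})\big],
\end{multline*}
where $(\hat{n}_{r},\hat{n}_{s})$ is the reference outward normal.

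The crux is showing the interior remainder vanishes, i.e.\ that $\partial_{s}y_{r,h}-\partial_{r}y_{s,h}$ has zero $L^{2}$-projection onto $V^{e}_{h}$, so that its pairing with any $v_{x}\in V^{e}_{h}$ is zero (and likewise for the $x$-terms). I would establish the discrete metric identity $\Pi(\partial_{s}y_{r,h}) = \Pi(\partial_{r}y_{s,h})$, with $\Pi$ the $L^{2}$ projection. For GD this follows from the tensor-product structure: the projected-derivative operators $\mat{\bar{P}}_{r} = \mat{\bar{M}}_{r}^{-1}\mat{\bar{L}}_{r}^{T}\mat{\bar{W}}_{r}\mat{\bar{D}}_{r}$ and $\mat{\bar{P}}_{s}$ act on different Kronecker factors, so both projections equal $(\mat{\bar{P}}_{s}\otimes\mat{\bar{P}}_{r})\vec{y}$; for simplicial elements derivatives of polynomials already lie in $V^{e}_{h}$, whence $\Pi\partial = \partial$ and the partials commute outright. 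This commutation is the discrete analogue of $\partial_{r}(Jr_{x})+\partial_{s}(Js_{x})=0$ and is the heart of the argument.

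Finally I would match the surface term to the claim. By \eref{eqn:Jrx}, on $\partial\hat{e}$ we have $\hat{n}_{r}y_{s,h}-\hat{n}_{s}y_{r,h} = \hat{n}_{r}(Jr_{x})_{h}+\hat{n}_{s}(Js_{x})_{h}$ and $\hat{n}_{s}x_{r,h}-\hat{n}_{r}x_{s,h} = \hat{n}_{r}(Jr_{y})_{h}+\hat{n}_{s}(Js_{y})_{h}$, which are exactly $S_{J}n_{x}$ and $S_{J}n_{y}$ through the standard reference-normal relation, provided the surface Jacobian and normal are computed from the traces of the same projected metric terms, i.e.\ ``using $x,y\in V^{e}_{h}$''. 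This yields $\int_{\partial\hat{e}}S_{J}(n_{x}v_{x}+n_{y}v_{y})$, completing the identity. I expect the remainder step to be the main obstacle; a secondary point to get right is the consistency of the boundary metric definition with the traced volume metric terms so that the two surface integrals coincide, since for GD the trace of the volume projected derivative equals the one-dimensional projected derivative of the edge trace, not the exact edge-trace derivative.
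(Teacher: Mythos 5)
Your proof is correct, but it takes a genuinely different route from the paper's. The paper never integrates by parts at the discrete level: after the same opening move (absorbing $\vec{1}$ into the diagonal metric matrices so that, e.g., $\vec{1}^{T}\mat{L}^{T}\mat{W}\mat{J}\mat{s}_{y}\mat{D}_{s}\vec{v}_{y}=\vec{x}_{r}^{T}\mat{L}^{T}\mat{W}\mat{D}_{s}\vec{v}_{y}=\int_{\hat{e}}x_{r}\,\partial_{s}v_{y}$ by quadrature exactness), it uses the defining property~\eref{eqn:xr:int} of the $L^{2}$-projection---with $\partial_{s}v_{y}$ playing the role of the test function---to replace the projected metric derivative $x_{r}$ by the exact $\partial x/\partial r$, then invokes the exact two-dimensional metric identities and finishes with the continuous divergence theorem applied to the exact map $x,y\in V^{e}_{h}$. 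That shortcut sidesteps both your discrete metric identity and your boundary-trace matching. Your route instead stays discrete throughout: the commutation $(\mat{I}\otimes\mat{\bar{P}}_{r})(\mat{\bar{P}}_{s}\otimes\mat{I})=(\mat{\bar{P}}_{s}\otimes\mat{I})(\mat{I}\otimes\mat{\bar{P}}_{r})$ is the right discrete analogue of $\partial_{r}(Jr_{x})+\partial_{s}(Js_{x})=0$, and pairing against $v_{x}\in V^{e}_{h}$ does let you insert the projection for free, so the volume remainder vanishes as you claim. The one loose end is the surface term you flag at the close: your boundary integrand carries the trace of the projected derivative (e.g., $y_{r,h}$ restricted to an edge), while the theorem's $S_{J}n_{x}$ is the exact derivative of the edge trace of $y\in V^{e}_{h}$; these differ pointwise, but the integrals coincide because the edge trace of $v_{x}$ lies in the one-dimensional trace space, so the one-dimensional projection can be removed under the integral---stating this one line closes your argument. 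It is worth noting that the paper's shortcut is not entirely free of the same fine print, since its projection step tests against $\partial_{s}v_{y}\notin V^{e}_{h}$ for GD and thus also leans implicitly on the tensor-product structure; your more explicit accounting makes the mechanism (a discrete divergence theorem built from discrete metric identities) more transparent, at the cost of length.
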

\begin{proof}
  See Appendix~\ref{app:thm:GD:div}
\end{proof}

Theorem~\ref{thm:GD:div} alone is not enough to guarantee a conservative and
constant preserving scheme. In concert with Theorem~\ref{thm:GD:div} we need the
following consistency statement about the mortar and element surface integrals:
\begin{align}
  \label{eqn:surf:mort:int}
  \int_{\hat{\partial e}} S_{J} \left(n_{x} v_{x} + n_{y} v_{y}\right)
  =
  \sum_{g \in \GG^{e}}
  \vec{1}^{T} {(\mat{L}^{g})}^{T} \mat{S}_{J}^{g} \mat{W}^{g} \vec{v}_{n}^{-},
\end{align}
for all $v_{x}, v_{y} \in V^{e}_{h}$.  With this we now have the following
result concerning the conservation and constant preserving properties of the
scheme.
\begin{theorem}\label{thm:conserved:constant}
  If all of the elements $e\in\EE$ satisfy both Theorem~\ref{thm:GD:div}
  and~\eref{eqn:surf:mort:int}, then
  scheme~\eref{eqn:semi:p}--\eref{eqn:semi:vy} is both conservative and constant
  preserving with periodic boundary conditions.
\end{theorem}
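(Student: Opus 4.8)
The plan is to prove the two properties separately, both of which reduce to the discrete integration-by-parts identities supplied by Theorem~\ref{thm:GD:div} and \eref{eqn:surf:mort:int}, together with two elementary facts about the bases. First, the stiffness matrices annihilate constants, $\mat{S}_{x}\vec{1} = \mat{S}_{y}\vec{1} = \vec{0}$, since $\mat{D}_{r}\vec{1} = \mat{D}_{s}\vec{1} = \vec{0}$ (the derivative operators are exact on the constant); second, the trace interpolation reproduces constants, $\mat{L}^{g}\vec{1} = \vec{1}$, i.e., the element constant equals the vector of ones at the mortar quadrature nodes. I would begin by recording the operator identity obtained from combining Theorem~\ref{thm:GD:div} with \eref{eqn:surf:mort:int}: taking $v_{y}\equiv 0$ with $v_{x}$ arbitrary and using $\vec{v}_{n}^{-} = \mat{n}_{x}^{-g}\mat{L}^{g}\vec{v}_{x} + \mat{n}_{y}^{-g}\mat{L}^{g}\vec{v}_{y}$ gives
\[
 \vec{1}^{T}\mat{S}_{x} = \sum_{g\in\GG^{e}}\vec{1}^{T}{(\mat{L}^{g})}^{T}\mat{S}_{J}^{g}\mat{W}^{g}\mat{n}_{x}^{-g}\mat{L}^{g},
\]
and symmetrically for $\mat{S}_{y}$. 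Transposing (the diagonal factors are symmetric and commute) yields $\mat{S}_{x}^{T}\vec{1} = \sum_{g}{(\mat{L}^{g})}^{T}\mat{n}_{x}^{-g}\mat{S}_{J}^{g}\mat{W}^{g}\mat{L}^{g}\vec{1}$, which is the workhorse for the parts that do not follow from $\mat{S}_{x}\vec{1}=\vec{0}$ alone.

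For conservation I would differentiate the discrete analogues of $\int_{\Omega}p$ and $\int_{\Omega}\vec{v}$, namely $\sum_{e}\vec{1}^{T}\mat{M}_{J}\vec{p}$ and $\sum_{e}\vec{1}^{T}\mat{M}_{J}\vec{v}_{x,y}$, in time using \eref{eqn:semi:p}--\eref{eqn:semi:vy}. Left-multiplying \eref{eqn:semi:p} by $\vec{1}^{T}$ and applying the displayed identity turns the volume term $\vec{1}^{T}(\mat{S}_{x}\vec{v}_{x}+\mat{S}_{y}\vec{v}_{y})$ into $\sum_{g}\vec{1}^{T}{(\mat{L}^{g})}^{T}\mat{S}_{J}^{g}\mat{W}^{g}\vec{v}_{n}^{-}$, which exactly cancels the $-\vec{v}_{n}^{-}$ piece of the flux term, leaving $\tfrac{d}{dt}\vec{1}^{T}\mat{M}_{J}\vec{p} = -\sum_{g}\vec{1}^{T}\mat{S}_{J}^{g}\mat{W}^{g}\vec{v}_{n}^{*}$. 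Summing over elements, each interior mortar is shared by two elements with opposite outward normals; because the normal-velocity flux reverses sign when the normal is reversed while $\mat{S}_{J}^{g}$, $\mat{W}^{g}$, and the constant weight (via $\mat{L}^{g}\vec{1}=\vec{1}$) are common to both sides, the two contributions cancel. For the momentum integrals the volume term vanishes outright, since $\vec{1}^{T}\mat{S}_{x}^{T} = {(\mat{S}_{x}\vec{1})}^{T} = \vec{0}$, and the mortar terms cancel because $p^{*}$ is single-valued across the mortar while $\mat{n}_{x}^{-g}$ is opposite on the two sides; both cancellations hold for any $\alpha\ge 0$. Periodicity is used to guarantee every mortar is interior, so no boundary remnants survive.

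For constant preservation I would insert a globally constant state $\vec{p}=\bar{p}\vec{1}$, $\vec{v}_{x}=\bar{v}_{x}\vec{1}$, $\vec{v}_{y}=\bar{v}_{y}\vec{1}$ and show each right-hand side vanishes, so that the SPD matrix $\mat{M}_{J}$ forces all time derivatives to zero. In \eref{eqn:semi:p} the volume term vanishes by $\mat{S}_{x}\vec{1}=\mat{S}_{y}\vec{1}=\vec{0}$, and the flux term vanishes because the constant state and $\mat{L}^{g}\vec{1}=\vec{1}$ give $\jmp{p}=0$ and $v_{n}^{+}=v_{n}^{-}$, hence $v_{n}^{*}=v_{n}^{-}$. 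In \eref{eqn:semi:vx} the volume term is $\bar{p}\,\mat{S}_{x}^{T}\vec{1}$, which is not zero; here the transposed identity is essential. Since $\jmp{v_{n}}=0$ yields $p^{*}=\bar{p}\vec{1}$, the flux term equals $\bar{p}\sum_{g}{(\mat{L}^{g})}^{T}\mat{n}_{x}^{-g}\mat{S}_{J}^{g}\mat{W}^{g}\mat{L}^{g}\vec{1} = \bar{p}\,\mat{S}_{x}^{T}\vec{1}$, cancelling the volume term; \eref{eqn:semi:vy} is identical with $y$ replacing $x$.

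The main obstacle is precisely the velocity equation in the constant-preservation argument (mirrored by the pressure integral in the conservation argument): unlike the pressure equation, whose volume term is killed directly by $\mat{S}_{x}\vec{1}=\vec{0}$, here the nonzero term $\mat{S}_{x}^{T}\vec{1}$ must be cancelled against the surface flux, and this holds only because Theorem~\ref{thm:GD:div} together with \eref{eqn:surf:mort:int} encodes a genuine discrete divergence theorem whose transpose matches $\mat{S}_{x}^{T}\vec{1}$ to $\sum_{g}{(\mat{L}^{g})}^{T}\mat{n}_{x}^{-g}\mat{S}_{J}^{g}\mat{W}^{g}\mat{L}^{g}\vec{1}$. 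The delicate bookkeeping lies in extracting the separate $x$- and $y$-identities from the combined statement of Theorem~\ref{thm:GD:div}, in invoking $\mat{L}^{g}\vec{1}=\vec{1}$ to identify the element constant with the mortar constant, and in confirming that the shared surface data ($\mat{S}_{J}^{g}$, $\mat{W}^{g}$, and the equal-and-opposite normals) make the two-sided flux contributions cancel even though the two neighboring elements carry different interpolation operators $\mat{L}^{g}$ on a nonconforming mortar.
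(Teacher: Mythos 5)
Your proposal is correct and follows essentially the same route as the paper's proof: it rests on $\mat{S}_{x}\vec{1}=\mat{S}_{y}\vec{1}=\vec{0}$, the combination of Theorem~\ref{thm:GD:div} with~\eref{eqn:surf:mort:int} to cancel the volume term against the surface flux in the velocity equations (and to reduce the pressure-mass rate to the $v_{n}^{*}$ mortar sum), and the equal-and-opposite fluxes/normals across interior mortars. Your explicit transposed operator identity $\mat{S}_{x}^{T}\vec{1}=\sum_{g}{(\mat{L}^{g})}^{T}\mat{n}_{x}^{-g}\mat{S}_{J}^{g}\mat{W}^{g}\mat{L}^{g}\vec{1}$ is just a repackaging of the paper's step of pairing the residual with an arbitrary $w\in V^{e}_{h}$, so the two arguments are the same in substance.
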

\begin{proof}
  See Appendix~\ref{app:thm:conserved:constant}
\end{proof}
For the two-dimensional problems considered here, the assumption
of~\eref{eqn:surf:mort:int} holds if:
\begin{enumerate}
  \item the surface Jacobian determinant times the components of the unit normal
    on $g$ is $S_{J}^{g}n_{x}^{-}, S_{J}^{g}n_{y}^{-} \in \mathbb{P}_{n}$ (e.g.,
    a polynomial in the mortar space);
  \item the quadrature on the mortar can integrate polynomials of degree $2 n$,
    (e.g., the right-hand side of~\eref{eqn:surf:mort:int} is exact); and
  \item on both sides of each mortar element $g$, the physical coordinates are
    discretely conforming, e.g., $x^{-} = x^{+}$ and $y^{-} = y^{+}$ along the
    interface.
\end{enumerate}
The implications of point 3 for GD elements comes from the fact that
the only function that is in both $\GD_{N_{1},n}$ and $\GD_{N_{2},n}$ for $N_{1} \ne
N_{2}$ are functions in $\mathbb{P}_{n}$. Thus, along nonconforming GD
interfaces, the coordinate transform must be a single polynomial along the
entire interface. For purely computational interfaces, this is not much of a
limitation (since interfaces between elements are artificial). For problems with
physical interfaces, such as a layered discontinuous materials (not considered
here), resolving the geometry more accurately requires increasing the number of
elements along the interface not (only) increasing the number of grid points
inside the elements. Of course, these are only considerations if conservation
and constant preserving properties are needed. Additionally, when GD elements
are coupled with simplicial elements this is likely not a constraint, since the
GD elements would likely be affine with any complexity in the geometry handled
using the more flexible curved simplicial elements.

For three-dimensional problems, more restrictions on the geometry are needed to
insure the conservation and constant preserving properties. Namely, in both the
conforming and nonconforming cases care is needed to insure that certain
metric products are in the approximation space; see for
instance~\cite{Kopriva2006jsc} for the conforming case
and~\cite{KozdonWilcox2018} for the nonconforming case.

\section{Results}
In this section we test the properties of the above defined discretization. The
tests are broken into two sets. We begin by testing the weight-adjusted
discontinuous Galerkin method (WADG) with the GD basis. We test the accuracy of
the WADG projection for the GD basis and then test the constant-preserving,
conservation, and accuracy properties of the scheme on nonconforming
curved GD meshes. We then move on to the coupling of GD elements with simplicial
elements, first comparing the time step restrictions of each of the methods and
then moving on to the accuracy of the coupled scheme; in all the coupled test
problems the GD order and polynomial order used on simplicial elements are chosen
to be the same.

In all tests, the Taylor time integration is used to advance the
semidiscrete scheme. Namely, in all cases the semidiscrete scheme can be written
as
\begin{align}
  \frac{d \vec{w}}{d t} &= \mat{A} \vec{w},
\end{align}
with $\vec{w}$ being the solution vector and $\mat{A}$ being the spatial
discretization. The $k^{th}$ order accurate Taylor time stepping scheme is
\begin{align}
  \mat{w}(t+\Delta t) \approx \sum_{l=0}^{k} \frac{\Delta t^{l}}{l!}\mat{A}^{l}
  \mat{w}(t),
\end{align}
where $\Delta t$ is the time step size. The scheme is locally stable (e.g., the
stability region crosses the imaginary axis) for orders $k=4l-1$ and $k = 4l$
with $l \in \mathbb{Z}^{+}$.  The temporal order $k$ used is the minimum local
stable $k$ such that $k > n$, e.g., the temporal order both locally stable and at
least one order higher than the spatial order.

Throughout, error in the solution is measured using the energy
norm
\begin{align}
  \label{eqn:error}
  \mbox{error} &=
  \sqrt{
  \frac{1}{2}
  \sum_{e\in\EE}
  \int_{\hat{e}} J \left(
  \Delta p^{2}
  +
  \Delta v_{x}^{2}
  +
  \Delta v_{y}^{2}
  \right)},
\end{align}
where $\Delta p$, $\Delta v_{x}$, and $\Delta v_{y}$ are the difference between
the computed and exact solutions. The integrals in~\eref{eqn:error} are
approximated using the element quadrature rule, with differences between the
computed and exact solutions evaluated at the quadrature points.
Results are reported with respect to refinement level. For GD elements
refinement is done with grid doubling (e.g., the 1-D grid spacing is cut in half
with each level of refinement). For the simplicial elements refinement is done
by quadrisection (e.g., each reference triangle is split into four similar
triangles).

The codes used in this paper are freely available at
\url{https://github.com/bfam/GDComplexGeometries}.  The simplicial element
scheme is implemented using MATLAB codes from~\cite{HesthavenWarburton2008}.

\subsection{WADG-GD Projection Accuracy}
We begin by testing the accuracy of the weight-adjusted approximation for GD
elements as well as the storage of the metric terms at the GD interpolation
nodes. Namely, we want to quantify the error made in the approximation of the
$L^{2}$-projection of a function $f(x,y)$ into the curved-GD approximation
space. Following~\cite{ChanEvans2018}, we consider the coordinate
transform
\begin{align}
  \label{eqn:proj:coord}
  x &= r + \beta \cos\left(\frac{3 \pi s}{2}\right)\cos\left(\frac{\pi r}{2}\right),&
  y &= s + \beta \sin\left(\frac{3 \pi r}{2}\right)\cos\left(\frac{\pi s}{2}\right).
\end{align}
Here the domain is $-1 \le r,s \le 1$ and $\beta$ is a parameter which controls
the regularity of the transform. We let there be a single GD element
with an interior grid of $(N+1) \times (N+1)$ points, e.g., the GD grid spacing
is $h = 2/N$ in each dimension.

The $L^{2}$-projection of a function $f(x,y)$ into the GD approximation space is
defined to be the function $\bar{f}\in V_{h}$ such that
\begin{align}
  \int_{\hat{\Omega}} \phi J \left(f - \bar{f}\right) = 0, \qquad \forall \phi \in
  V_{h}.
\end{align}
Here $V_{h} = \GD_{N \times N,n}$ and the Jacobian determinant is $J$.  Using
GD quadrature to approximate the integrals gives the matrix problem:
\begin{align}
  \label{eqn:WA:L2:J:proj}
  \mat{M}_{J} \bar{\vec{f}} &= \mat{L}^{T} \mat{W} \vec{f}_{q},
\end{align}
where $\vec{f}_{q}$ is the exact $f$ evaluated at the quadrature nodes and
$\bar{\vec{f}}$ is the projection of $f$ into the GD space.
We define the $L^2$-error in the approximation as
\begin{align}
  \label{eqn:WA:eL2:J}
  e_{L2} &= \int_{\hat{\Omega}} J {\left(f - \bar{f}\right)}^{2} \approx
  \vec{\Delta}_{q}^{T} \mat{W} \mat{J} \vec{\Delta}_{q},
  \quad
  \vec{\Delta}_{q} = \vec{f}_{q} - \mat{L}\bar{\vec{f}}.
\end{align}
The error $e_{L2}$ depends on $f$, the mesh skewness parameter
$\beta$, the number of points $N$, the GD approximation order $n$, the boundary
treatment, and any approximations used for $\mat{M}_{J}$. Three different
approaches to handling $\mat{M}_{J}$ will be considering:
\begin{itemize}
  \item $L^{2}$-projection: the mass matrix $\mat{M}_{J} = \mat{L}^{T} \mat{W}
    \mat{J} \mat{L}$ with the exact Jacobian determinant evaluated at the
    quadrature points;
  \item WAGD\@: weight-adjusted approximation of the mass matrix with
    $\mat{M}_{1/J} = \mat{L}^{T} \mat{W} \mat{J}^{-1} \mat{L}$ evaluated using
    the exact Jacobian determinant evaluated at the quadrature points; and
  \item inexact WAGD\@: weight-adjusted approximation of the mass matrix with
    $\mat{M}_{1/J}$ evaluated using $\mat{J}$ as defined by~\eref{eqn:J:approx}
    with the coordinate points $x$ and $y$ projected into the GD space using
    the same quadrature rule, e.g., $\mat{M} \bar{\vec{x}} = \mat{L}^{T} \mat{W}
    \vec{x}$ with $\mat{x}$ being $x$ evaluated at the quadrature nodes
    and $\mat{M}$ being the reference GD mass matrix.
\end{itemize}
In all cases the exact Jacobian determinant is used when evaluating the right-hand side of
\eref{eqn:WA:L2:J:proj} and in the computation of the error~\eref{eqn:WA:eL2:J}.
For all tests, the base mesh uses $n = N$.

\begin{figure}[tb]
  \centering
  \begin{minipage}{0.48\textwidth}
  \includegraphics{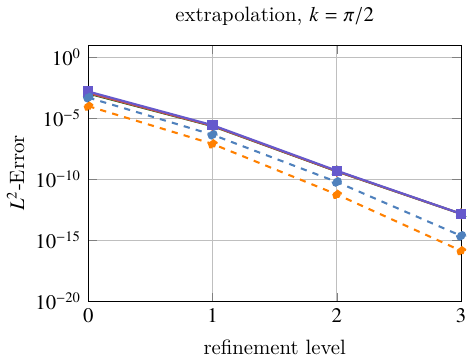}
  \end{minipage}
  \hfill
  \begin{minipage}{0.48\textwidth}
  \includegraphics{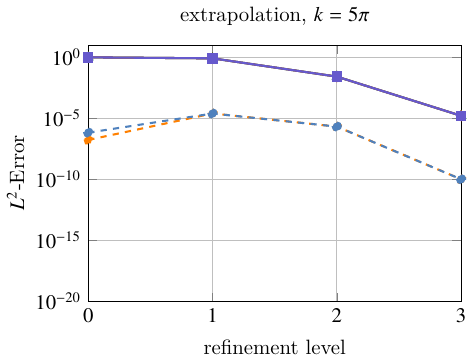}
  \end{minipage}\\
  \begin{minipage}{0.48\textwidth}
  \includegraphics{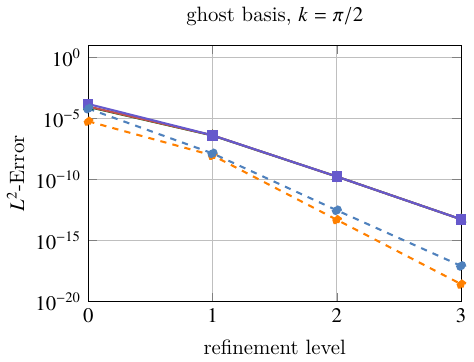}
  \end{minipage}
  \hfill
  \begin{minipage}{0.48\textwidth}
  \includegraphics{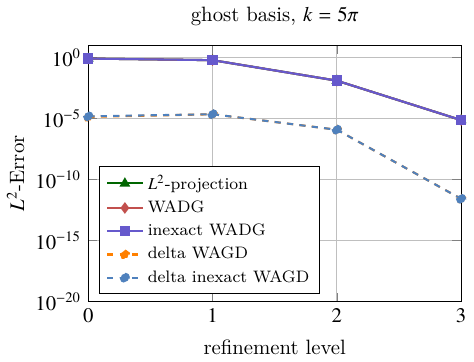}
  \end{minipage}
  \caption{Error in representing $f(x,y) = \cos(kx)\cos(ky)$ with a GD
  approximation ($n=5$) on a grid defined by \eref{eqn:proj:coord} with
  $\beta=1/8$. The base mesh used $(n+1)\times(n+1)$ interior GD
  nodes.\label{fig:wadg:err:b1}}
\end{figure}
\begin{figure}[tb]
  \centering
  \begin{minipage}{0.48\textwidth}
  \includegraphics{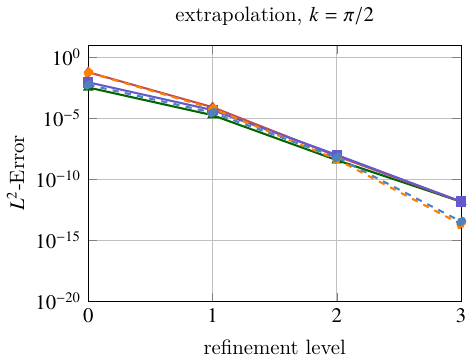}
  \end{minipage}
  \hfill
  \begin{minipage}{0.48\textwidth}
  \includegraphics{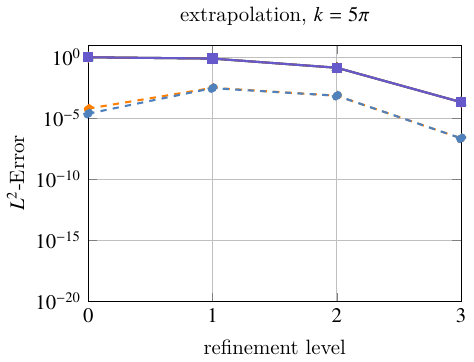}
  \end{minipage}\\
  \begin{minipage}{0.48\textwidth}
  \includegraphics{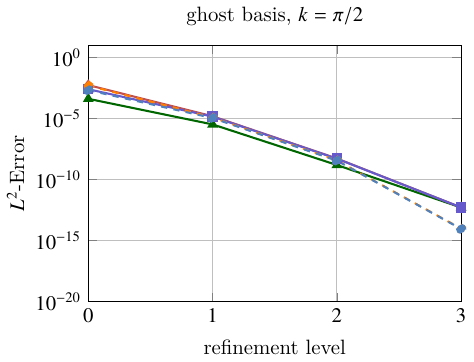}
  \end{minipage}
  \hfill
  \begin{minipage}{0.48\textwidth}
  \includegraphics{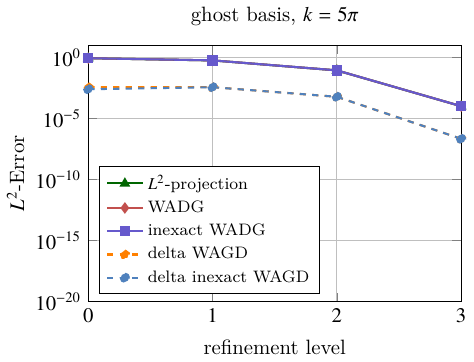}
  \end{minipage}
  \caption{Error in representing $f(x,y) = \cos(kx)\cos(ky)$ with a GD
  approximation ($n=5$) on a grid defined by \eref{eqn:proj:coord} with
  $\beta=0.22$. The base mesh used $(n+1)\times(n+1)$ interior GD
  nodes.\label{fig:wadg:err:b2}}
\end{figure}
Figures~\ref{fig:wadg:err:b1} and~\ref{fig:wadg:err:b2} show the results for
$\beta = 1/8$ and $\beta=0.22$, respectively, using both the extrapolation (top
row) and ghost basis (bottom row) method with polynomial order $n=5$. The
function being approximated in both cases is $f(x,y) = \cos(kx)\cos(ky)$ with
$k=\pi/2$ (left column) and $k=5\pi$ (right column); here the lower the value of
$k$ the smoother the function and the better the overall approximation will be.
In the figures the $L^{2}$-error for the $L^{2}$-projection, WADG, and inexact
WADG are all plotted, but are not visible as they lie on top of one another.
Therefore we also show the difference between WAGD and the
$L^{2}$-projection (labeled \emph{delta WADG}) as well as the difference between
inexact WAGD and the $L^{2}$-projection (labeled \emph{delta inexact WADG}).

As can be seen both WADG and inexact WADG are accurate, and in most
cases the error is on the same order as the $L^{2}$-projection;
this is indicated by the fact that the delta lines are below the
$L^{2}$-projection lines. In all cases the weight-adjusted approaches are
high-order accurate and storing the metric terms at the GD interpolation nodes
does not significantly impact the accuracy of the weight-adjusted approach. As
the theory
of~\cite{ChanHewettWarburton2017_curviWADG} suggests, the weight-adjusted
approaches does better with more regular geometry mappings (lower $\beta$
values), which explains the fact that at lower resolutions the delta lines are
above the $L^{2}$-projection curves for $k=\pi/2$ and $\beta = 0.22$.  In all
the tests the ghost basis method outperforms the extrapolation method for the
same grid spacing, though it is worth noting that for the same grid spacing
the ghost basis method has $\sim 4 (n-1) N$ more degrees of freedom than the
extrapolation method.

Though not shown, we note that with $n=3$ and $\beta=0.22$ the approximated
Jacobian determinant had a negative value when $N = 6$, which shows that storage
of the Jacobian determinant at the interpolation nodes requires that the
geometry approximation be sufficiently well-resolved for this storage strategy
to work in practice.

One approach that could be used if under-resolved geometries were needed would
be to store the square root the Jacobian determinant at the GD interpolation
nodes. Namely, first the square root of the Jacobian determinant is computed at
the  GD quadrature nodes:
\begin{align}
  \vec{\tilde{J}}_{sq} = \sqrt{
    \left(\mat{D}_r \vec{x}\right) \odot \left(\mat{D}_s \vec{y}\right)
    -
    \left(\mat{D}_s \vec{y}\right) \odot \left(\mat{D}_r \vec{y}\right)
  },
\end{align}
where $\odot$ is the Hadamard (componentwise) product of two vectors and the
square root is applied componentwise. The square root of the Jacobian
determinant at the quadrature nodes $\vec{\tilde{J}}_{sq}$ is then projected
to the GD interpolation nodes for storage:
\begin{align}
  \vec{J}_{sq} = \mat{M}^{-1} \mat{L}^{T} \mat{W} \vec{\tilde{J}}_{sq}.
\end{align}
When the Jacobian determinant is needed at the quadrature nodes the square root
of the Jacobian determinant is interpolated and squared:
\begin{align}
  \label{eqn:Jsq:approx}
  \vec{J} = {\left(\mat{L} \vec{J}_{sq}\right)}^{2},
\end{align}
where the square is applied componentwise; the form of \eref{eqn:Jsq:approx} is
a replacement for \eref{eqn:J:approx} which is the computation of the Jacobian
determinant from the metric derivatives after they have been interpolated to the
quadrature nodes. Tests (not shown) have verified that this approach seems to
have minimal impact on the accuracy of the scheme when the solution is
well-resolved.

\subsection{Constant Preservation, Conservation, and Energy Stability of WADG-GD}
\begin{figure}
  \centering
  \includegraphics{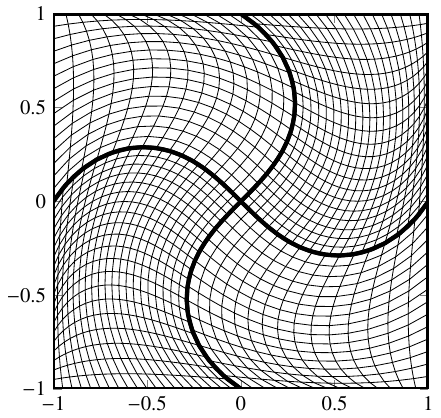}
  \caption{Skewed mesh used for the constant preservation and conservation test
  problem defined by global
  transformation~\eref{eqn:global:tran}.\label{fig:const:mesh}}
\end{figure}
Here we examine the constant-preserving and conservation properties of
WADG-GD\@ with the computation of metric terms given in
Section~\ref{sec:GD:metric}. In this test we consider the domain defined by the
transformation
\begin{align}
  \label{eqn:global:tran}
  x &=  r \cos(\beta) + s \sin(\beta), &
  y &= -r \sin(\beta) + s \cos(\beta), &
  \beta = \frac{\pi}{4}\left(1-r^2\right) \left(1-s^2\right).
\end{align}
Here the domain is $-1 \le r,s \le 1$.  The domain is discretized using
four GD elements as shown in Figure~\ref{fig:const:mesh} using
$16\times16$ (interior) GD mesh in the bottom-right and top-left elements and a
$21\times21$ (interior) GD mesh for the other two elements.

We consider two approaches to handling the geometry. In the first case
we project the transformation~\eref{eqn:global:tran} into the GD space; the
$L^{2}$-projection is approximated using the GD quadrature grid.
Across non-conforming interfaces the mesh will be discretely discontinuous
because the elements on either side of the interface have different
representations of the geometry. This means that the mortar element based metric
terms will be different than (at least one of) the GD element metric terms; in our
case we calculate metric terms on the mortar by averaging $x$ and $y$ values
from the two sides of each mortar element at the mortar element quadrature
nodes. We call this first approach the \emph{discontinuous geometry
approximation}.  Note that even if the mesh is conforming, the
$L^{2}$-projection can cause the geometry to be discretely discontinuous if
continuity is not enforced between elements.

In the second case, the mesh is made watertight by modifying the $x$ and $y$
values at the GD interpolation nodes along the non-conforming interfaces.
Namely, the grid values along the interface are replaced with values in
$\mathbb{P}_{n}$. These values are calculated along each interface using an
$L^{2}$-projection which preserves corner values. For instance, the points along
the interface $-1 \le r \le 1$ and $s = -1$ are $\bar{x}, \bar{y} \in
\mathbb{P}_{n}$ which satisfy for all $\phi\in\mathbb{P}_{n}$:
\begin{align}
  \int_{-1}^{0} \phi(r) (x(r,-1) -\bar{x}(r)) = 0,
  ~\bar{x}(-1) = x(-1,-1),
  ~\bar{x}(0) = x(0,-1),\\
  \int_{-1}^{0} \phi(r) (y(r,-1) -\bar{y}(r)) = 0,
  ~\bar{y}(-1) = y(-1,-1),
  ~\bar{y}(0) = y(0,-1).
\end{align}
Note that values at nodes not on the interface are not modified by
this procedure. On both sides of the interface the projections are approximated
using $21$ Chebyshev nodes of the second kind, and thus both sides of the mortar
elements use the same polynomial along the interface. Since the coordinate
points on the interface are the same for both sides of the element,
constraint~\eref{eqn:surf:mort:int} is satisfied and
Theorem~\ref{thm:conserved:constant} applies. We call this second approach the
\emph{watertight geometry approximation}.

\begin{figure}
  \centering
  \begin{minipage}{0.4\textwidth}
  \includegraphics{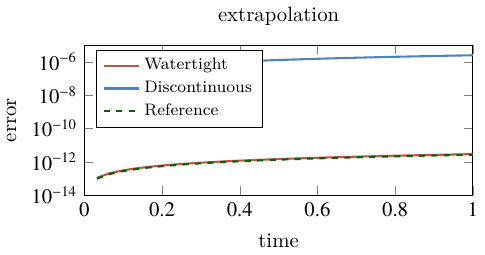}
  \end{minipage}
  \hfill
  \begin{minipage}{0.4\textwidth}
  \includegraphics{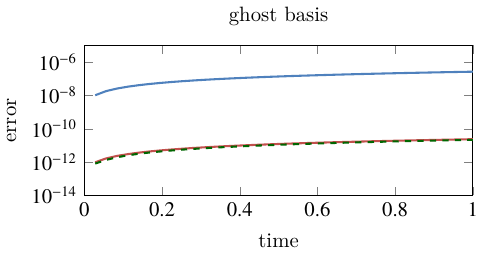}
  \end{minipage}
  \caption{Error versus time for the constant preservation test using mesh
  given in Figure~\ref{fig:const:mesh} with $n=7$.\label{fig:constant:preserving}}
\end{figure}
In order to test the ability of the scheme to preserve constants, we use the
initial condition $v_{x} = 1$, $v_{y} = 2$, and $p = 3$ with periodic boundary
conditions. If the scheme preserved constants the initial values will not
change with time, in other words the error in the solution should be
(approximately) zero. In Figure~\ref{fig:constant:preserving} the error for both
the watertight and discontinuous geometry approximations are shown for both the
extrapolation and ghost basis GD methods with $n=7$. Also included in the figure
the results from a conforming reference mesh. In the conforming reference case
each GD element is discretized using a $21\times21$ (interior) GD mesh; note
that since the $L^2$-projection of the coordinate transformation is done element by
element the geometry will be discontinuous, thus to make the geometry watertight
in the conforming case we interpolate the transformation along the boundaries of the
elements. As can be seen, the discontinuous geometry approximation has error
growth with time whereas the watertight mesh matches the conforming reference
case.

\begin{figure}
  \centering
  \begin{minipage}{0.4\textwidth}
  \includegraphics{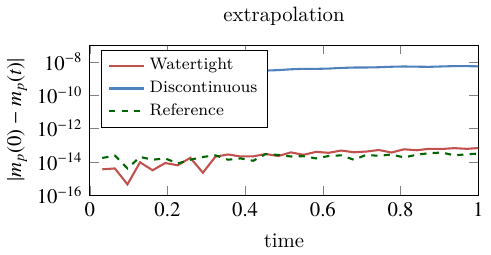}
  \end{minipage}
  \hfill
  \begin{minipage}{0.4\textwidth}
  \includegraphics{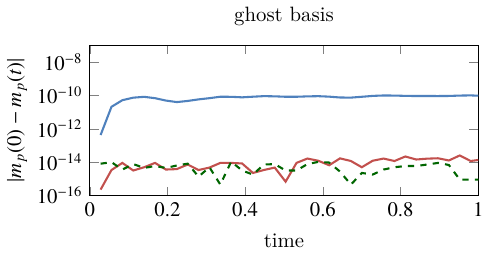}
  \end{minipage}
  \caption{Change in $m_p$ versus time for the conservation test using mesh
  given in Figure~\ref{fig:const:mesh} with $n=7$.\label{fig:conservation}}
\end{figure}
In order to test the conservation properties of the scheme, we use a
pseudorandom initial condition with periodic boundary conditions. We then define
the mass in each component of the solution at time $t$ to be
\begin{align}
  m_{v_{x}} &= \vec{1}^{T} \mat{M} \mat{M}_{1/J}^{-1} \mat{M} \vec{v_{x}},&
  m_{v_{y}} &= \vec{1}^{T} \mat{M} \mat{M}_{1/J}^{-1} \mat{M} \vec{v_{y}},&
  m_{p}     &= \vec{1}^{T} \mat{M} \mat{M}_{1/J}^{-1} \mat{M} \vec{p}.
\end{align}
We highlight that here we are not using $\mat{M}_{J}$ but the WADG-GD mass
matrix because it is with respect to this matrix that the scheme is
conservative. In Figure~\ref{fig:conservation} the absolute difference
$|m_{p}(t)-m_{p}(0)|$ is given for $n=7$ for the extrapolation and ghost basis
GD methods; mass components $m_{v_{x}}$ and $m_{v_{y}}$ are conserved even in
the discontinuous geometry case due to the use of the weak derivative (see
Appendix~\ref{app:thm:conserved:constant}). As can be seen the discontinuous
geometry is not conservative, whereas the watertight mesh has the same
conservation properties as the conforming reference mesh.

\begin{figure}
  \centering
  \begin{minipage}{0.4\textwidth}
  \includegraphics{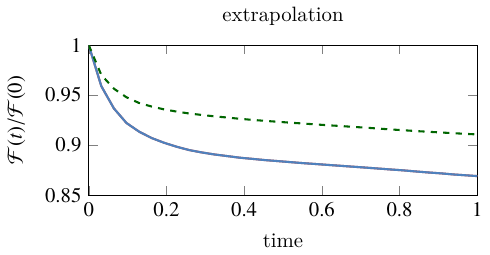}
  \end{minipage}
  \hfill
  \begin{minipage}{0.4\textwidth}
  \includegraphics{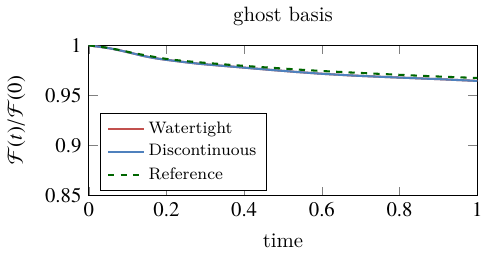}
  \end{minipage}
  \caption{Normalized energy versus time using a pseudorandom initial condition
  with mesh given in Figure~\ref{fig:const:mesh} for $n=7$.\label{fig:energy}}
\end{figure}
We now turn to confirming the energy stability properties of the scheme. To do
this, we use the pseudorandom initial condition with periodic boundary
conditions. In Figure~\ref{fig:energy} normalized energy in the solution versus
time is shown for $n=7$ for both the extrapolation and ghost basis GD methods.
As can be seen, both the schemes with all of the different geometry treatments
dissipate energy in time as predicted by the previous stability analysis
and the dissipation is essentially the same for both approximate geometry
treatments; by
using a pseudorandom initial value we are ensuring that energy is widely spread
across the eigenmodes of the operator.

It is worth noting that the approach we have taken to make the mesh watertight
will not work in general for GD elements for several reasons. First, as the mesh
within an element is refined the geometry approximation does not change.  For
homogeneous materials, as considered here, this is not a problem because the
interface is purely computational (e.g., the accuracy of the solution does not
depend on this interface being represented accurately). For heterogeneous
materials with discontinuous material properties (not considered here) this may
be a problem since convergence will require accurate resolution of the
interface. An additional problem with this approach is that by only moving
points on the interface it is possible that points will be moved past their
neighbors, giving rise to a negative Jacobian determinant; this is in particular
an issue at high resolution.  Lastly, the modification of the geometry can
result in a significant change in the time step restriction. For instance, when
this problem was run with $n=5$ the time step restriction for the watertight
mesh was $6$ times smaller than for the discontinuous mesh.

There are several possible approaches to address these issues if the
constant-preserving and conservation properties are desired. For purely
computational interfaces, a transfinite blending of the surface into the volume
could be used in instead of a global transform. This would mean that as the GD
mesh is refined the element could not become inverted (assuming the transfinite
blend did not invert the element).
For physical interfaces, one could resolve the interface more accurately by
adding elements to the mesh (as opposed to increasing the grid lines within the
elements).

\subsection{Accuracy of WADG-GD on Nonconforming Meshes}
In this test, we explore the accuracy of WAGD-DG\@. Using the
previously defined transform~\eref{eqn:global:tran} (see
Figure~\ref{fig:conservation}) and with the zero velocity boundary
condition~\eref{eqn:bc}. For any $k \in \mathbb{Z}$ the modal solution
\begin{align}
  \label{eqn:modal:box}
  p   &= \sqrt{2}           \cos\left(\pi k \frac{x+1}{2}\right) \cos\left(\pi k \frac{y+1}{2}\right) \cos\left(\frac{\sqrt{2}}{2} \pi k t\right),\\
  v_x &= \phantom{\sqrt{2}} \sin\left(\pi k \frac{x+1}{2}\right) \cos\left(\pi k \frac{y+1}{2}\right) \sin\left(\frac{\sqrt{2}}{2} \pi k t\right),\\
  v_y &= \phantom{\sqrt{2}} \cos\left(\pi k \frac{x+1}{2}\right) \sin\left(\pi k \frac{y+1}{2}\right) \sin\left(\frac{\sqrt{2}}{2} \pi k_{n} t\right),
\end{align}
satisfies governing equations~\eref{eqn:strong} with~\eref{eqn:bc} on $\Omega =
[-1,1] \times [-1,1]$. Using $k = 15$ we run this test for both the
extrapolation and ghost basis GD schemes with varying orders and mesh
resolutions. The final time for the simulation is $t = 2 \sqrt{2} / k$ which
corresponds to one full oscillation of the solution. The base mesh for all
orders is as shown in the figure; the bottom-right and top-left elements use an
$16\times16$ (interior) GD mesh and a $21\times21$ (interior) GD mesh for the
other two elements. We refine the mesh by doubling the resolution inside each
element. For this test we only consider the discontinuous geometry
representation, with an (approximate) $L^{2}$-projection of the coordinate
transform into the GD space.

Results for the test are shown in Figure~\ref{fig:curved:GDbox} and
Table~\ref{tab:curved:GDbox}. As can be seen, the method is converging
at a
high-order rate for each $n$ as the resolution is increased. For the same grid
spacing, the ghost basis method outperforms the extrapolation method.
\begin{figure}
  \centering
  \includegraphics{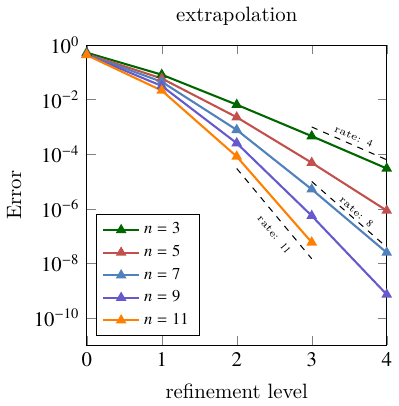}
  \includegraphics{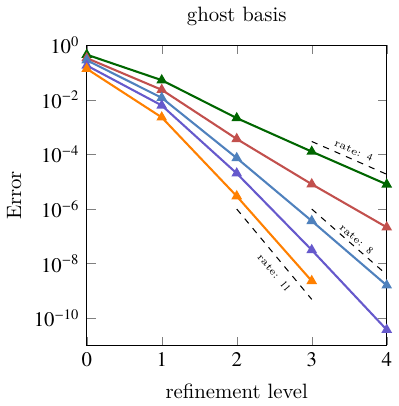}
  \caption{Error plot for extrapolation and ghost basis WADG-GD with modal
  solution~\eref{eqn:modal:box} using coordinate
  transform~\eref{eqn:global:tran}.\label{fig:curved:GDbox}}
\end{figure}
\begin{table}
  \centering
  \sisetup{
    round-mode = places,
    round-precision = 2
  }  \begin{tabular}{r*{5}{l}}
    \toprule
    & \multicolumn{1}{c}{$n = 3$} & \multicolumn{1}{c}{$n = 5$}
    & \multicolumn{1}{c}{$n = 7$} & \multicolumn{1}{c}{$n = 9$}
    & \multicolumn{1}{c}{$n = 11$}\\
    level & \multicolumn{1}{c}{error (rate)} & \multicolumn{1}{c}{error (rate)} & \multicolumn{1}{c}{error (rate)} & \multicolumn{1}{c}{error (rate)} & \multicolumn{1}{c}{error (rate)}\\
    \midrule
    \multicolumn{6}{c}{WADG-GD (extrapolation)}\\
     $0$ & \num{5.278241531733167e-01} \hfill                               & \num{4.680890313445866e-01} \hfill                               & \num{4.501327889709894e-01} \hfill                               & \num{4.442725248433605e-01} \hfill                               & \num{4.312916176453430e-01} \hfill                              \\
     $1$ & \num{8.267765157852183e-02} \hfill (\num{2.674488054381229e+00}) & \num{6.004596142297955e-02} \hfill (\num{2.962643837618395e+00}) & \num{4.533602104661039e-02} \hfill (\num{3.311620976196402e+00}) & \num{3.189668665984739e-02} \hfill (\num{3.799966449792017e+00}) & \num{2.162472706849611e-02} \hfill (\num{4.317909847556565e+00})\\
     $2$ & \num{6.547878160925849e-03} \hfill (\num{3.658398027890305e+00}) & \num{2.271393513396949e-03} \hfill (\num{4.724417640057879e+00}) & \num{7.801717525817215e-04} \hfill (\num{5.860722202955637e+00}) & \num{2.541482118669331e-04} \hfill (\num{6.971592678027062e+00}) & \num{8.224999593988012e-05} \hfill (\num{8.038450601251835e+00})\\
     $3$ & \num{4.602943799617403e-04} \hfill (\num{3.830398744274308e+00}) & \num{4.867336946488549e-05} \hfill (\num{5.544301210731073e+00}) & \num{5.159809863094978e-06} \hfill (\num{7.240330050005244e+00}) & \num{5.517899974663394e-07} \hfill (\num{8.847335060409781e+00}) & \num{5.846448542835842e-08} \hfill (\num{10.45823937452516e+00})\\
     $4$ & \num{3.005257581651583e-05} \hfill (\num{3.936996272619943e+00}) & \num{8.584235803992650e-07} \hfill (\num{5.825299133283498e+00}) & \num{2.471727473927287e-08} \hfill (\num{7.705654409367351e+00}) & \num{7.195263460917866e-10} \hfill (\num{9.582856075755167e+00}) &                             \hfill                              \\
    \midrule
    \multicolumn{6}{c}{WADG-GD (ghost basis)}\\
     $0$ & \num{4.571557063635704e-01} \hfill                               & \num{3.419869827594216e-01} \hfill                               & \num{2.875400607334112e-01} \hfill                               & \num{1.833503267356858e-01} \hfill                               & \num{1.386917765485475e-01} \hfill                              \\
     $1$ & \num{5.301316115032918e-02} \hfill (\num{3.108263152641414e+00}) & \num{2.351885764031444e-02} \hfill (\num{3.862051519754254e+00}) & \num{1.193482505707691e-02} \hfill (\num{4.590513645214815e+00}) & \num{6.280086058936816e-03} \hfill (\num{4.867674697578933e+00}) & \num{2.291395903410760e-03} \hfill (\num{5.919511691729250e+00})\\
     $2$ & \num{2.155280152967059e-03} \hfill (\num{4.620403255836337e+00}) & \num{3.711422723699117e-04} \hfill (\num{5.985701846458552e+00}) & \num{7.385725699443476e-05} \hfill (\num{7.336222021060834e+00}) & \num{2.028074583577549e-05} \hfill (\num{8.274529809713274e+00}) & \num{2.954996862419789e-06} \hfill (\num{9.598854432621158e+00})\\
     $3$ & \num{1.274743204786010e-04} \hfill (\num{4.079596856873435e+00}) & \num{8.138866450801820e-06} \hfill (\num{5.511000644665891e+00}) & \num{3.658374426689140e-07} \hfill (\num{7.657393133301697e+00}) & \num{3.134027948667796e-08} \hfill (\num{9.337876948347230e+00}) & \num{2.300947954896926e-09} \hfill (\num{10.32671253159323e+00})\\
     $4$ & \num{7.993286275235284e-06} \hfill (\num{3.995274077749775e+00}) & \num{2.138588575580875e-07} \hfill (\num{5.250097008828048e+00}) & \num{1.585865712691288e-09} \hfill (\num{7.849788316872291e+00}) & \num{3.738919384305316e-11} \hfill (\num{9.711180964956956e+00}) &                             \hfill                              \\
    \bottomrule
  \end{tabular}
  \caption{Error and calculated rates for WADG-GD with modal
  solution~\eref{eqn:modal:box} using coordinate
  transform~\eref{eqn:global:tran}.\label{tab:curved:GDbox}}
\end{table}

\subsection{Simplicial elements vs GD elements comparison}\label{sec:dg:gd:comp}
We now turn to the coupling of simplicial and GD elements.  In order
to motivate this coupling, in this section we seek to compare the time step
restrictions and number of degrees of freedom needed for the two classes of
schemes. The challenge in doing this is the selection of the mesh resolution
for each method.  Just choosing the two meshes so that they have the same
number of degrees of freedom may not be the right choice, since even though the
methods may converge at the same rate the constant multiplying the error terms
may be different. We address this by setting the resolutions for each method so
that the error for each method is roughly the same for a modal problem and
then determine the maximum stable time step for each method. In this test, the
simplicial mesh is taken to be a mesh of $64$ right triangles; the mesh is
generated by initially splitting the square into 4 right triangles and then
iterative refining the mesh 2 times with quadrisection refinement.

\begin{table}
  \centering
  \sisetup{
    round-mode = places,
    round-precision = 1
  }  \begin{tabular}{rr|cccc|cccc}
    \toprule
    &
    & \multicolumn{4}{c|}{extrapolation}
    & \multicolumn{4}{c}{ghost basis}\\
    $n$ & $k_{n}$ &
    $N^{E}$ & $\displaystyle\frac{N_{p}^{\mathbb{P} }}{N_{p}^{E}}$ & $\displaystyle\frac{\Delta t^{E}}{\Delta t^{\mathbb{P} }}$ & $\displaystyle\frac{\Delta t^{E}N_{p}^{\mathbb{P} }}{N_{p}^{E}\Delta t^{\mathbb{P} }}$ &
    $N^{G}$ & $\displaystyle\frac{N_{p}^{\mathbb{P} }}{N_{p}^{G}}$ & $\displaystyle\frac{\Delta t^{G}}{\Delta t^{\mathbb{P} }}$ & $\displaystyle\frac{\Delta t^{G}N_{p}^{\mathbb{P} }}{N_{p}^{G}\Delta t^{\mathbb{P} }}$\\
    \midrule
     $3$ &  $1$ & $15$ & \num{2.5000e+00} & \num{2.2340e+00} & \num{5.5849e+00} & $11$ & \num{3.2653e+00} & \num{1.6567e+00} & \num{5.4097e+00}\\
     $5$ &  $4$ & $27$ & \num{1.7143e+00} & \num{1.6996e+00} & \num{2.9137e+00} & $24$ & \num{1.5981e+00} & \num{1.3436e+00} & \num{2.1472e+00}\\
     $7$ &  $7$ & $30$ & \num{2.3975e+00} & \num{1.9286e+00} & \num{4.6239e+00} & $23$ & \num{2.5600e+00} & \num{1.3616e+00} & \num{3.4857e+00}\\
     $9$ & $12$ & $38$ & \num{2.3143e+00} & \num{1.8428e+00} & \num{4.2647e+00} & $30$ & \num{2.3143e+00} & \num{1.3436e+00} & \num{3.1096e+00}\\
    $11$ & $15$ & $43$ & \num{2.5785e+00} & \num{1.9091e+00} & \num{4.9228e+00} & $34$ & \num{2.4652e+00} & \num{1.5698e+00} & \num{3.8699e+00}\\
    \bottomrule
  \end{tabular}
  \caption{Polynomial discontinuous Galerkin and Galerkin Difference time step
  comparison\label{tab:dt}}
\end{table}
In this test we take the domain to be the unit square $\Omega = [-1,1] \times
[-1,1]$ with the modal solution~\eref{eqn:modal:box}. The mode number $k_{n}$
used for each polynomial order $n$ is given in Table~\ref{tab:dt}. The mode
number is selected to be the largest value $k_{n}$ that results in an error
smaller that $10^{-3}$ at time $t = 2 \sqrt{2} / k_{n}$ using simplicial
elements with polynomial order $n$; as noted above the simplicial element mesh
is fixed at $64$ elements.

For the GD method, the unit square is discretized using a single GD block that has
$(N+1) \times (N+1)$ interior GD points; the total number of points depends on
whether the extrapolation or ghost basis method is used. For each polynomial
order $n$, the value of $N$ is chosen to be the smallest value so that the error
at time $t = 2\sqrt{2} / k_{n}$ is less than the simplicial element solution
with the same polynomial order.  Table~\ref{tab:dt} gives the value of $N$
used for GD with the extrapolation and ghost basis boundary treatments; in the
table superscript $E$ is used for values related to the extrapolation GD
method, subscript $G$ is used for the ghost basis GD method, and $\mathbb{P}$
for simplicial elements.

A first observation is that for the same grid spacing
the ghost basis method is more accurate than the extrapolation method, namely
$2/N^{G} > 2/N^{E}$. That said, in some cases ($n=5$ and $n=11$) the ghost basis
method requires more degrees of freedom; for a given grid size the total degrees
of freedom for the extrapolation method is $N_{p}^{E} = {(N^{E}+1)}^2$ as
compared with $N_{p}^{G} = {(N^{G}+n)}^2$ for the ghost basis method. That said,
both GD element types out perform simplicial elements in terms of number of
degrees of freedom; the $64$ simplicial element mesh requires
$N_{p}^{\mathbb{P}} = 64 (n+1)(n+2)/2$ degrees of freedom. This is seen columns
labeled $N_p^{\mathbb{P}}/N_{p}^{E}$ and $N_p^{\mathbb{P}}/N_{p}^{G}$ of
Table~\ref{tab:dt} where the ratio total degrees of freedom is given.
Since all these ratios are greater than $1$, both GD element types are
outperforming the simplicial elements with these parameters. This
comparison is important because if the schemes are assumed to be memory
bandwidth limited on current computing architectures, the total number of
degrees of freedom can be taken as a proxy for the time to perform a single
right-hand side evaluation.

We now turn to the consideration of the maximum stable time step for each
method. We determine the maximum time step by initializing the solution vectors
to pseudorandom values, then taking $100$ time steps. The maximum time step is
then the largest time step that results in no energy growth; by using a
pseudorandom initial condition energy is spread across the eigenmodes of the
numerical scheme. Table~\ref{tab:dt} gives the ratio of
the maximum time step of the two GD methods ($\Delta t^{E}$ and $\Delta t^{G}$)
to the simplicial element method ($\Delta t^\mathbb{P}$); note that when
comparing time steps the GD time step is on top and when comparing number of
degrees of freedom DG was on top, thus in both cases numbers larger than $1$
favor GD\@. As can be seen, for the given mesh resolutions, meshes with either
of the GD element types can take larger time steps than meshes with simplicial
elements\@. We also note that even though the ghost basis has more favorable
accuracy for a given grid spacing, the time step is smaller.

\begin{table}
  \centering
  \sisetup{
    round-mode = places,
    round-precision = 2
  }  \begin{tabular}{rccccc}
    \toprule
    & simplicial
    & extrapolation
    & ghost basis\\
    $n$
    & $\displaystyle\frac{n\Delta t^{\mathbb{P} }}{2r^{\mathbb{P} }}$
    & $\displaystyle\frac{\Delta t^{E}}{h^{E}}$
    & $\displaystyle\frac{\Delta t^{G}}{h^{G}}$
    & $m$
    & $\rho$
    \\
    \midrule
     $3$ & \num{0.30995} & \num{0.50702} & \num{0.27574} & 4  & \num{-2.7853e+00}\\
     $5$ & \num{0.38457} & \num{0.51690} & \num{0.36322} & 7  & \num{-3.9541e+00}\\
     $7$ & \num{0.36313} & \num{0.43956} & \num{0.23791} & 8  & \num{-4.3136e+00}\\
     $9$ & \num{0.39834} & \num{0.45389} & \num{0.26127} & 11 & \num{-5.4504e+00}\\
    $11$ & \num{0.37513} & \num{0.41000} & \num{0.26656} & 12 & \num{-5.8228e+00}\\
    \bottomrule
  \end{tabular}
  \caption{CFL restriction comparison for simplicial and GD elements;
  see also Table~\ref{tab:dt}. Here $m$ is the order of the Taylor time stepping
  method that has been used and $\rho$ minimum extent of the stability
  region.\label{tab:cfl}}
\end{table}
An important observation for the time step restriction is that the ratio is
roughly constant across polynomial orders once the error level for the schemes
has been fixed. To explore this further, in Table~\ref{tab:cfl} we give the CFL
restrictions for each of the methods and polynomial orders. For
simplicial elements, the time step is expected to scale with $2r^\mathbb{P} / n$
where $r^{\mathbb{P}}$ is the radius of the inscribe circle for the elements; on
this mesh $r^{\mathbb{P}} = \sqrt{2} / 4(1+\sqrt{2})$. For the GD elements the
time step should scale with mesh size, namely $h^{E} = 2/N^{E}$ and $h^{G} =
2/N^{G}$.
As can be seen in Table~\ref{tab:cfl}, the time step restriction for the Taylor
time integration method is roughly constant with respect to polynomial order for
all three element types, and the extrapolation GD is favorable to the ghost
basis GD by a factor of 1.5 to 2.
We note that the stability region increases with order for the Taylor time
integration method, so the extent of the eigenvalue spectra do increase with
order.
This can be seen in the last two columns of Table~\ref{tab:cfl} where
order of the Taylor time integration method for spatial order
$n$ and the minimum real extent of the stability region ($\rho$) is given.

\begin{figure}
  \centering
  \includegraphics{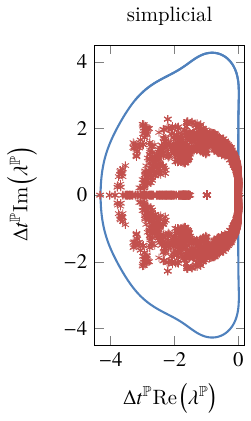}
  \hfill
  \includegraphics{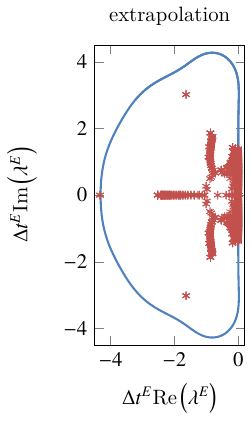}
  \hfill
  \includegraphics{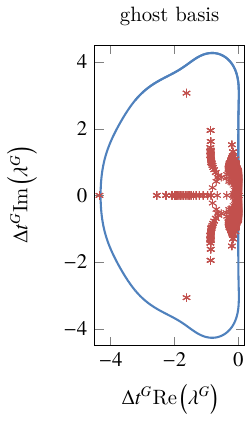}
  \caption{Time step scaled eigenvalue spectra for the methods with $n=7$ along
  with the order $8$ Taylor time integration stability region; see
  Table~\ref{tab:dt}.\label{fig:eig}}
\end{figure}
To explore this further, Figure~\ref{fig:eig} shows the time step scaled
eigenvalue spectra for the three methods with $n=7$ along with the order $8$
Taylor time integration scheme stability region; the eigenvalues and stability
region have been computed numerically. As can be seen, it is a purely real
eigenvalue that limits the time step for each method. For the GD schemes, the
eigenmode (not shown) associated with the time step restrictive eigenvalues has
most of its energy at degrees of freedom at the corners of the element. We note
that this phenomenon is multidimensional; in one space dimension the spectral
radius of the GD differentiation matrices was found to be bounded independent of
polynomial degree~\cite{BanksHagstrom2016JCP}. It is surprising that for a
tensor product grid this property is not preserved, so the origin and,
hopefully, suppression of the corner modes is a topic for future study.

Since the inverse of the time step is the number of steps required to complete a
simulation, if the schemes are memory bandwidth limited the overall time to solution
savings (as compared with the simplicial element method is) is $(\Delta t^{E}/N_{p}^{E})
(N_{p}^{\mathbb{P}}/\Delta t^{\mathbb{P}})$ for extrapolation GD and $(\Delta
t^{G}/N_{p}^{G}) (N_{p}^{\mathbb{P}}/\Delta t^{\mathbb{P}})$ for ghost basis
GD\@.  As can be seen in Table~\ref{tab:dt}, both GD schemes are favorable in
this metric. Additionally, in all cases except $n = 3$, the extrapolation method
out performs the ghost basis method.

\subsection{Waves on a Disk}
In this test we take $\Omega$ to the unit disk and consider the analytic
solution
\begin{align}
  p = \frac{\partial^2 u}{\partial t^2},&\quad
  v_{x} = \frac{\partial^2 u}{\partial t\partial x},\quad
  v_{y} = \frac{\partial^2 u}{\partial t\partial y},
\end{align}
with the function $u$ being
\begin{align}
  u &= \alpha \cos\left(R_{0} t - \beta \theta\right)J_{\beta}\left(R_{0} r\right).
\end{align}
Here $J_{\beta}(r)$ is the Bessel function of the first kind and $(r, \theta)$
are the polar coordinates of the Cartesian point $(x,y)$. In the test we use
$\beta = 7$, choose $R_{0}$ so that $J'_{\beta}(R_{0}) = 0$ which ensures that
the $\vec{n}\cdot\vec{v} = 0$ on the outer edge of the disk, and $\alpha$
is a normalization constant so that the energy in the solution is $1$. For
$n=3$, $5$, and $7$ we choose $R_{0}$ to be the $2^{nd}$ root so that $R_{0}
\approx 12.93$ with $\alpha \approx 0.024$, and for $n=9$ and $11$ we choose
$R_{0}$ to be the $5^{th}$ root so that $R_{0} \approx 109.6$ with
$\alpha\approx0.0091$; both $R_{0}$ and $\alpha$ are determined numerically.

Here we consider three different methods for handling the disk geometry:
\begin{enumerate}
  \item WADG on simplicial elements: base mesh of 48 simplicial elements,
  \item WADG on simplicial elements coupled with GD\@: base mesh of 20
    simplicial elements coupled with a central GD element in the center (see
    right panel of Figure~\ref{fig:decomp}), and
  \item WADG-GD\@: 5 GD elements with the 4 edge elements being curved (see left
    panel of Figure~\ref{fig:decomp}).
\end{enumerate}
In the case of the purely simplicial mesh, the 20 elements around the edge of
the base mesh match those in the right panel of Figure~\ref{fig:decomp}, e.g.,
the GD element is replaced with 28 simplicial elements.

For the coupled simplicial GD problem, the GD element has lower-left and
upper-right corners $(\pm \sqrt{2}/2,\pm \sqrt{2}/2)$ and is discretized using
an $(N_{0}+1) \times (N_{0}+1)$ grid of points, where $N_{0}$ for the
extrapolation and ghost basis methods are given in Table~\ref{tab:disk:param}.
These parameters were chosen so that the coupled method has a lower error than
the purely simplicial method at time $t = 2 \pi \beta/ R_{0}$ with the same $n$
for refinement level 0 (with the exception of $n=3$ where refinement level 1 is
considered).

For the WADG-GD mesh, the center GD element has lower-left and
upper-right corners $(\pm 1/3, \pm 1/3)$ and is discretized using an
$(N_{1}+1)\times(N_{1}+1)$ interior grid. The outer elements have interior grids
that are $(N_{2}+1)\times(N_{3}+1)$ where $N_{2}$ and $N_{3}$ are chosen so that
the grid spacing along edges of the element are no larger than the grid spacing
in the center element. The values for the extrapolation and ghost basis method
used are given in Table~\ref{tab:disk:param}, where the resolution is chosen in
the same manner as for the coupled simplicial-GD element mesh.

\begin{table}
  \centering
  \sisetup{
    round-mode = places,
    round-precision = 1
  }  \begin{tabular}{r|cc|cc}
    \toprule
    & \multicolumn{2}{c|}{GD coupled to WADG simplicial}
    & \multicolumn{2}{c}{WADG-GD}\\
    & \multicolumn{1}{c}{extrapolation}
    & \multicolumn{1}{c|}{ghost basis}
    & \multicolumn{1}{c}{extrapolation}
    & \multicolumn{1}{c}{ghost basis}\\
    $n$
    & $N_{0}^{E}$
    & $N_{0}^{G}$
    & $N_{1}^{E}$, $N_{2}^{E}$, $N_{3}^{E}$
    & $N_{1}^{G}$, $N_{2}^{G}$, $N_{3}^{G}$
    \\
    \midrule
     3 & 11 &  9 &  4,  6, 10 &  4,  6, 10\\
     5 & 18 & 14 &  7, 10, 17 &  6,  9, 15\\
     7 & 25 & 19 & 10, 15, 24 &  7, 10, 17\\
     9 & 31 & 24 & 14, 20, 33 & 11, 16, 26\\
    11 & 37 & 28 & 16, 23, 38 & 13, 19, 31\\
    \bottomrule
  \end{tabular}
  \caption{GD parameters used for the disk test problem.\label{tab:disk:param}}
\end{table}

\begin{table}
  \centering
  \sisetup{
    round-mode = places,
    round-precision = 2
  }  \begin{tabular}{r*{3}{c}}
    \toprule
    & \multicolumn{1}{c}{$n = 3$} & \multicolumn{1}{c}{$n = 5$}
    & \multicolumn{1}{c}{$n = 7$}\\
    level & \multicolumn{1}{c}{error (rate)} & \multicolumn{1}{c}{error (rate)} & \multicolumn{1}{c}{error (rate)}\\
    \midrule
    \multicolumn{4}{c}{WADG simplicial elements}\\
    0 & \num{3.611296949496219e-01}                               & \num{6.083116285581943e-03}                               & \num{1.825920799775534e-04}                              \\
    1 & \num{9.203374140358527e-03} (\num{5.294210365874241e+00}) & \num{1.014107533963521e-04} (\num{5.906528037102187e+00}) & \num{8.355251883080034e-07} (\num{7.771725152321618e+00})\\
    2 & \num{4.610489464405055e-04} (\num{4.319171053852672e+00}) & \num{1.653421046924065e-06} (\num{5.938612674217500e+00}) & \num{3.414220046022262e-09} (\num{7.934983468314990e+00})\\
    3 & \num{2.894431242566665e-05} (\num{3.993568129178997e+00}) & \num{2.614815287845413e-08} (\num{5.982601309112714e+00}) & \num{1.367892194823012e-11} (\num{7.963457698410362e+00})\\
    \midrule
    \multicolumn{4}{c}{WADG simplicial with extrapolation GD}\\
    0 & \num{2.478470804730398e-01}                               & \num{5.627389183672092e-03}                               & \num{1.530783757387598e-04}                              \\
    1 & \num{7.785155930010187e-03} (\num{4.992580518844761e+00}) & \num{9.332673178337914e-05} (\num{5.914031557106949e+00}) & \num{6.862176585291379e-07} (\num{7.801388531494359e+00})\\
    2 & \num{4.795183026203974e-04} (\num{4.021068148248689e+00}) & \num{1.537259729154147e-06} (\num{5.923857531995649e+00}) & \num{2.820188314548592e-09} (\num{7.926730941210903e+00})\\
    3 & \num{3.029901176156319e-05} (\num{3.984243237821968e+00}) & \num{2.461084102688672e-08} (\num{5.964923167394024e+00}) & \num{2.377059320271666e-11} (\num{6.890469783129403e+00})\\
    \midrule
    \multicolumn{4}{c}{WADG simplicial with ghost basis GD}\\
    0 & \num{6.500838663651717e-01}                               & \num{6.053000296565362e-03}                               & \num{1.549481285954082e-04}                              \\
    1 & \num{7.654722695344096e-03} (\num{6.408131925170787e+00}) & \num{1.189553156892957e-04} (\num{5.669158774514485e+00}) & \num{6.838147899155189e-07} (\num{7.823963988873074e+00})\\
    2 & \num{4.076192417615750e-04} (\num{4.231056053740304e+00}) & \num{2.444365189363285e-06} (\num{5.604816091271417e+00}) & \num{2.825346579055799e-09} (\num{7.919033965657567e+00})\\
    3 & \num{2.541562604147590e-05} (\num{4.003434482852548e+00}) & \num{5.715512213567377e-08} (\num{5.418433234505450e+00}) & \num{1.636937480544905e-11} (\num{7.431284818288966e+00})\\
    \midrule
    \multicolumn{4}{c}{WADG extrapolation GD}\\
    0 & \num{1.047995203178337e-01}                               & \num{3.401510409018690e-03}                               & \num{1.158588814343971e-04}                              \\
    1 & \num{6.538621829227872e-03} (\num{4.002501717930536e+00}) & \num{8.590376095223496e-05} (\num{5.307310398333870e+00}) & \num{5.890927376318919e-07} (\num{7.619658158792015e+00})\\
    2 & \num{4.932859763617259e-04} (\num{3.728490406698442e+00}) & \num{1.762604570249927e-06} (\num{5.606940539270076e+00}) & \num{2.880278963514050e-09} (\num{7.676142409554920e+00})\\
    3 & \num{3.403450157813931e-05} (\num{3.857354386206650e+00}) & \num{3.136477088595773e-08} (\num{5.812420016706740e+00}) & \num{1.246326253519226e-11} (\num{7.852382962277724e+00})\\
    \midrule
    \multicolumn{4}{c}{WADG ghost basis GD}\\
    0 & \num{5.995894486004658e-02}                               & \num{3.369897179598947e-03}                               & \num{1.762225861330997e-04}                              \\
    1 & \num{2.207675177386475e-03} (\num{4.763375171075338e+00}) & \num{1.815592990860356e-04} (\num{4.214191844428171e+00}) & \num{1.064421372522312e-06} (\num{7.371185649877119e+00})\\
    2 & \num{1.401578591911452e-04} (\num{3.977403369975927e+00}) & \num{3.399257777855724e-06} (\num{5.739077242870874e+00}) & \num{5.095931116675839e-09} (\num{7.706507891949374e+00})\\
    3 & \num{9.220026503347084e-06} (\num{3.926137936135417e+00}) & \num{7.748802124029840e-08} (\num{5.455102657009050e+00}) & \num{2.187180012217608e-11} (\num{7.864130002240985e+00})\\
    \bottomrule
  \end{tabular}
  \caption{Disk results for $R_{0} \approx 12.93$ and $\alpha \approx
  0.024$.\label{tab:low:disk}}
\end{table}
\begin{table}
  \centering
  \sisetup{
    round-mode = places,
    round-precision = 2
  }  \begin{tabular}{r*{3}{c}}
    \toprule
    & \multicolumn{1}{c}{$n = 9$} & \multicolumn{1}{c}{$n = 11$}\\
    level & \multicolumn{1}{c}{error (rate)} & \multicolumn{1}{c}{error (rate)}\\
    \midrule
    \multicolumn{3}{c}{WADG simplicial elements}\\
    0 & \num{8.861561266116692e-04}                               & \num{4.149992234824648e-05}                              \\
    1 & \num{1.217035639173385e-06} (\num{9.508045674902130e+00}) & \num{1.412816408540612e-08} (\num{11.52031891790478e+00})\\
    2 & \num{1.316541602170609e-09} (\num{9.852402589951810e+00}) & \num{3.855427884159626e-12} (\num{11.83939540069439e+00})\\
    \midrule
    \multicolumn{3}{c}{WADG simplicial with extrapolation GD}\\
    0 & \num{7.099699663471176e-04}                               & \num{3.491643268586258e-05}                              \\
    1 & \num{9.570921920454426e-07} (\num{9.534884381446060e+00}) & \num{1.138750296098981e-08} (\num{11.58223902588830e+00})\\
    2 & \num{1.061048750220075e-09} (\num{9.817023146135465e+00}) & \num{7.065335946341717e-12} (\num{10.65440564848854e+00})\\
    \midrule
    \multicolumn{3}{c}{WADG simplicial with ghost basis GD}\\
    0 & \num{8.578966229555754e-04}                               & \num{3.957039508530920e-05}                              \\
    1 & \num{9.714387393393707e-07} (\num{9.786465077509680e+00}) & \num{1.607328653272746e-08} (\num{11.26554080491132e+00})\\
    2 & \num{1.052588914601808e-09} (\num{9.850037103328855e+00}) & \num{5.631280087747494e-11} (\num{8.156986324971022e+00})\\
    \midrule
    \multicolumn{3}{c}{WADG extrapolation GD}\\
    0 & \num{4.702187427996997e-04}                               & \num{3.569955628973031e-05}                              \\
    1 & \num{8.776283657325478e-07} (\num{9.065506174570364e+00}) & \num{1.847951954552407e-08} (\num{10.91576317956372e+00})\\
    2 & \num{1.124975162869076e-09} (\num{9.607573195398022e+00}) & \num{1.094083674058746e-11} (\num{10.72198845510951e+00})\\
    \midrule
    \multicolumn{3}{c}{WADG ghost basis GD}\\
    0 & \num{6.294842476601364e-04}                               & \num{4.232843605625833e-05}                              \\
    1 & \num{6.792468851083185e-07} (\num{9.856018516219788e+00}) & \num{1.187863328043767e-08} (\num{11.79904261629000e+00})\\
    2 & \num{8.218839677536524e-10} (\num{9.690785598275131e+00}) & \num{1.531812390975664e-10} (\num{6.276985429617779e+00})\\
    \bottomrule
  \end{tabular}
  \caption{Disk results for $R_{0} \approx 109.6$ and
  $\alpha\approx0.0091$.\label{tab:high:disk}}
\end{table}
Results for the disk test problem are given in Table~\ref{tab:low:disk} for
$n=3, 5, 7$ and in Table~\ref{tab:high:disk} for $n=9, 11$. As can be seen all
of the schemes perform as expected converging at high order.

\begin{table}
  \centering
  \sisetup{
    round-mode = places,
    round-precision = 1
  }  \begin{tabular}{rccccc}
    \toprule
    &
    & Simplicial with
    & Simplicial with
    \\
    $n$
    & Simplicial
    & extrapolation GD
    & ghost basis GD
    & extrapolation GD
    & ghost basis GD
    \\
    \midrule
      3 & \num{3.381557709571431e-02} & \num{3.857699388980444e-02} & \num{3.864228073749922e-02} & \num{2.630305358962537e-02} & \num{1.319501614859978e-02}\\
      5 & \num{2.570868318934896e-02} & \num{2.816630801157351e-02} & \num{2.814419150007283e-02} & \num{1.517996900388158e-02} & \num{1.188810059965932e-02}\\
      7 & \num{1.762932196002077e-02} & \num{1.940242771362551e-02} & \num{1.770238337686799e-02} & \num{8.694612820182083e-03} & \num{7.027439380337052e-03}\\
      9 & \num{1.485950929183757e-02} & \num{1.619367473248401e-02} & \num{1.501419514002846e-02} & \num{6.534411302347341e-03} & \num{4.652811135037450e-03}\\
     11 & \num{1.127438061395471e-02} & \num{1.226493022250229e-02} & \num{1.187971648584490e-02} & \num{5.093361513156943e-03} & \num{3.987999005999079e-03}\\
    \bottomrule
  \end{tabular}
  \caption{Maximum stable time step for each of the schemes base mesh for the
  disk using the parameters in Table~\ref{tab:disk:param}\label{tab:disk:dt}}
\end{table}
An important question is whether the coupling of GD with simplicial elements has
negatively affected the time step of the scheme. In Table~\ref{tab:disk:dt} we
give the maximum stable time step (determined as in
Section~\ref{sec:dg:gd:comp}) for each of the schemes. As can be seen, the
coupled simplicial and GD schemes have similar time step restrictions as the
pure simplicial method. Additionally, the benefits of handling geometry with
smaller simplicial elements is clear from the increased time step as compared
with the purely GD schemes; though we note that using more GD elements would
likely improve the GD time step since the grid spacing could be made more
uniform.

\subsection{Inclusion Scattering}
\begin{figure}
  \centering
  \includegraphics{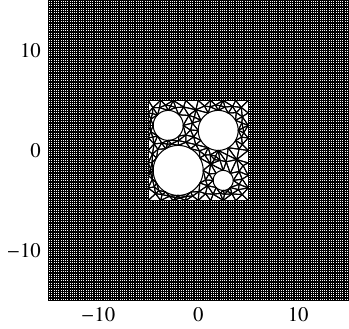}
  \includegraphics{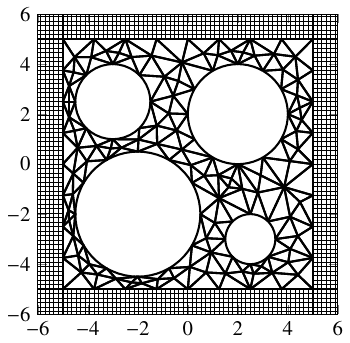}
  \caption{Computational mesh used for the inclusion test problem. (left) Full
  computational domain and base mesh. (right) Close up view showing the base
  simplicial mesh along with the GD-simplicial
  interface.\label{fig:inclusion:mesh}}
\end{figure}
As a final test we consider the scattering off of four cavity cylinders in an
acoustic medium. The domain is of size $15 \times 15$ and the four cylinder
centers $(x,y)$ and radii $R$ are
\begin{align}
  x_{1} &= \phantom{-}2  , & y_{1} &= \phantom{-}2  , & R_{1} &= 2  , &
  x_{2} &=           -2  , & y_{2} &=           -2  , & R_{2} &= 2.5, \\
  x_{3} &=           -3  , & y_{3} &= \phantom{-}2.5, & R_{3} &= 1.5, &
  x_{4} &= \phantom{-}2.5, & y_{4} &=           -3  , & R_{4} &= 1  .
\end{align}
The computational mesh is shown in
Figure~\ref{fig:inclusion:mesh}.  Around the four cylinders $240$ simplicial
elements are used and away from the cylinders $8$ large GD elements are used;
each of the GD elements is $5 \times 5$ and the region covered by
simplicial elements is of size $5 \times 5$. On the base mesh, each of the
GD-simplicial interfaces has 8 simplicial elements and each of the GD elements
uses an $(8n+1)\times(8n+1)$ interior grid; the GD grid spacing is thus $h =
5/8n$ in each direction. The outer and cylindrical inclusion boundaries are
taken to satisfy the zero velocity boundary condition~\eqref{eqn:bc}. For this
test we consider polynomial orders $n=5$ and $n=7$ with GD elements using the
extrapolation basis.

The initial condition is taken to be
\begin{align}
  p_{0} = \exp\left(-2{(r-10)}^2\right),
  \qquad
  \vec{v}_{0} = \vec{0},
\end{align}
where $r = \sqrt{x^2 + y^2}$. This initial condition is a ring of radius
$8$ centered at the origin. The pressure field at times $t=5$, 10, 15, and 30 is
shown in Figure~\ref{fig:inclusion:field}. As can be seen the wave field becomes
quite complex through the continued interaction with the cylinders.
\begin{figure}
  \centering
  \includegraphics{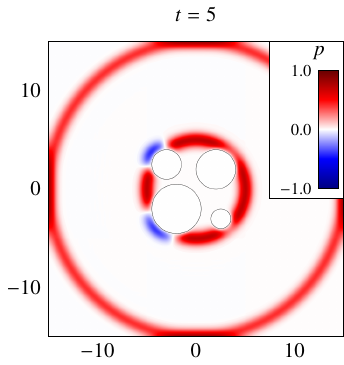}
  \includegraphics{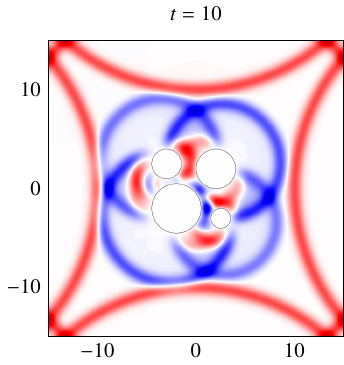}
  \includegraphics{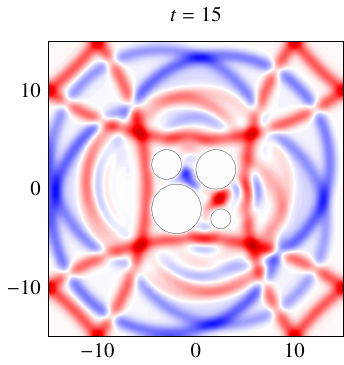}
  \includegraphics{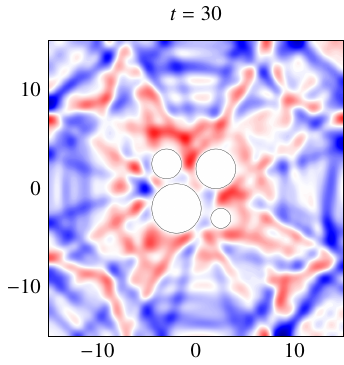}

  \caption{Computed pressure field solution for the inclusion problem. Solution
  is shown for $n=5$ on the base mesh. The same saturated $-1$ to $1$ color
  scale is used for all panels with the colorbar shown in the top-left
  panel.\label{fig:inclusion:field}}
\end{figure}

Since we do not know the analytic solution for this problem, we assess the
accuracy of the scheme through a self-convergence study. Namely, for a fixed $n$
we run the solution at three resolutions; the mesh is refined as before using
grid doubling for the GD elements and quadrisection for the simplicial
elements. We denote the solution on level $l$ as $p^{(l)}$, $v_{x}^{(l)}$, and
$v_{y}^{(l)}$, with level $0$ being the base resolution and level $2$ being the
finest resolution. The convergence rate is estimated as
\begin{align}
  \mbox{rate estimate} &=
  \log_{2} \left(\frac{\Delta^{(1)}}{\Delta^{(2)}}\right),\\
  \label{eqn:delta:l}
  {\left(\Delta^{(l)}\right)}^{2}
  &=
  \frac{1}{2}
  \sum_{e\in\EE}
  \int_{\hat{e}} J \left(
  {\left(p^{(l)}-p^{(l-1)}\right)}^{2}
  +
  {\left(v_{x}^{(l)}-v_{x}^{(l-1)}\right)}^{2}
  +
  {\left(v_{y}^{(l)}-v_{y}^{(l-1)}\right)}^{2}\right).
\end{align}
We approximate the integral in \eref{eqn:delta:l} using the finest level's
quadrature rule, namely the solutions from levels $0$ and $1$ are interpolated to
the quadrature nodes of the level $2$ mesh.

When this test is run with polynomial order $n=5$ the estimated convergence rate
is $6.3$ and when $n=7$ the estimated convergence rate is $7.5$ at time $t =
30$.

\section{Conclusions}
In this paper we have shown how the Galerkin difference (GD) method can be used to
efficiently handle complex geometries. Two approaches were considered: the use
of curved GD elements with efficient handling of the mass matrix using the
weight-adjusted approximation and the coupling with simplicial elements. In all
cases the interface between elements can be nonconforming, meaning that
different approximation spaces are used on the two sides of the interface.
Semidiscrete energy stability was achieved through the use of a skew-symmetric
discontinuous Galerkin formulation, with nonconforming interfaces handling using
mortar elements. The scheme was observed to be high-order accurate in all of
the test problems.

When coupling with simplicial elements, the results suggest that if there are
$K$ simplicial elements of order $n$ along an interface the number of GD
degrees of freedom along the interface should be roughly $nK$. We also
observed that for a similar accuracy the GD method in general required fewer
degrees of freedom than polynomial simplicial elements and that the allowed
time step was larger. This suggests, that if implemented efficiently, GD can
lead to a faster time to solution and that the scheme is good candidate for
local time stepping.

One area for future work is the fact that in multiple dimensions, the GD scheme
has a more restrictive time step than in one dimension. The modes restricting
the time step are largely associated with the corners of the elements. This
suggests that by modifying the GD basis near the corners of the element the time
step might be improved.

\section*{Acknowledgments}
Jeremy~E.~Kozdon was partially supported by National Science Foundation Award
EAR-1547596 and Office of Naval Research Award N0001416WX01290. Computational
resources were provided in part by National Science Foundation Award
OAC-1403203.

Thomas~Hagstrom was supported by contracts from the U.S. Department of Energy
ASCR Applied Math Program and NSF Grant DMS-1418871. Any opinions, findings, and
conclusions or recommendations expressed in this material are those of the
authors and do not necessarily reflect the views of the Department of Energy or
the National Science Foundation.

Jeffrey~W.~Banks was supported by contracts from the U.S. Department of Energy
ASCR Applied Math Program, and by a U.S. Presidential Early Career Award for
Scientists and Engineers.

\appendix

\section{Proof of Theorem~\ref{thm:stability}}\label{app:thm:stability}
Taking the time derivative of \eref{eqn:eng:elm} and substituting in
\eref{eqn:semi:p}--\eref{eqn:semi:vy} gives the following element energy
dissipation rate
\begin{align}
  \frac{d\En^{e}}{dt} = \sum_{g\in\GG^{e}} \Delta^{-g}
\end{align}
where the mortar based energy rate of change is
\begin{align}
  \Delta^{-g} =
  -{\left(\vec{p}^{-}\right)}^{T} \mat{S}_{J}^{g} \mat{W}^{g} \vec{v}^{*}_{n}
  +
  {\left(\vec{p}^{-}\right)}^{T} \mat{S}_{J}^{g} \mat{W}^{g} \vec{v}^{-}_{n}
  -
  {\left(\vec{v}_{n}^{-}\right)}^{T} \mat{S}_{J}^{g} \mat{W}^{g} \vec{p}^{*}.
\end{align}
Thus, the energy rate of change depends only on the energy rate of change
across the mortar elements. To complete the proof we only need to consider a
single mortar element $g$ and show that energy does not increase across $g$.
To do this we separately consider the case of $g$ being a boundary mortar
element and a mortar element between two elements.

If $g$ is a boundary mortar element, then substituting in \eref{eqn:flux} for
$\vec{v}^{*}_{n}$ and $\vec{p}^{*}$ with $p^{+} = p^{-}$ and $v_{n}^{+} =
-v_{n}^{-}$ gives
\begin{align}
  \Delta^{-g} = -\alpha {\left(\vec{v}^{-}_{n}\right)}^{T} \mat{S}_{J}^{g}
  \mat{W}^{g} \vec{v}^{-}_{n},
\end{align}
which is non-positive if $\alpha \ge 0$ and $\mat{S}_{J}^{g}$ is positive; note
that $\mat{W}^{g}$ is positive because the mortar quadrature is assumed to be
a positive weight rule. Thus boundary mortars are energy non-increasing.

Recall that the mortar is partitioned such that $g$ is a mortar element between
two elements, $e^{-}$ and $e^{+}$. Since the flux and normal vectors are
described from the viewpoint of an element, the two sides will use equal and
opposite normal vectors. Using the normal of element $e^{-}$ in the calculation
for $e^{+}$ gives
\begin{align}
  \Delta^{+g} =
  {\left(\vec{p}^{+}\right)}^{T} \mat{S}_{J}^{g} \mat{W}^{g} \vec{v}^{*}_{n}
  -
  {\left(\vec{p}^{+}\right)}^{T} \mat{S}_{J}^{g} \mat{W}^{g} \vec{v}^{+}_{n}
  +
  {\left(\vec{v}_{n}^{+}\right)}^{T} \mat{S}_{J}^{g} \mat{W}^{g} \vec{p}^{*}.
\end{align}
With this, the energy rate of change across $g$ is then
\begin{align}
  \Delta^{-g} + \Delta^{+g} =
  {\jmp{\vec{p} }}^{T} \mat{S}_{J}^{g} \mat{W}^{g} \vec{v}^{*}_{n}
  +
  {\jmp{\vec{v}_{n} }}^{T} \mat{S}_{J}^{g} \mat{W}^{g} \vec{p}^{*}
  +
  {\left(\vec{p}^{-}\right)}^{T} \mat{S}_{J}^{g} \mat{W}^{g} \vec{v}^{-}_{n}
  -
  {\left(\vec{p}^{+}\right)}^{T} \mat{S}_{J}^{g} \mat{W}^{g} \vec{v}^{+}_{n}.
\end{align}
Using \eref{eqn:flux} as viewed from $e^{-}$ for $\vec{v}^{*}_{n}$ and
$\vec{p}^{*}$, this becomes
\begin{align}
  \Delta^{-g} + \Delta^{+g} =\;&
  -\frac{\alpha}{2}{\jmp{\vec{p} }}^{T} \mat{S}_{J}^{g} \mat{W}^{g} \jmp{\vec{p}}
  -\frac{\alpha}{2}{\jmp{\vec{v}_{n} }}^{T} \mat{S}_{J}^{g} \mat{W}^{g}
  \jmp{\vec{v}_{n}},
\end{align}
which is non-positive if $\alpha \ge 0$ and $\mat{S}_{J}^{g}$ is positive. Thus
energy is non-increasing across mortars between elements.

Since energy is non-increasing across all mortar elements $g$, we have that
$\frac{d}{dt}\En \le 0$ and the result $\En(t) \le
\En(0)$ for $t > 0$ follows upon integration.

\section{Proof of Theorem~\ref{thm:GD:div}}\label{app:thm:GD:div}
Using~\eref{eqn:Jrx} it follows that
\begin{align}
  \vec{1}^{T} \mat{L}^{T} \mat{W} \mat{J}\mat{s}_{y} \mat{D}_{s} \mat{v}_{y}
  =
  \vec{x}_{r}^{T} \mat{L}^{T} \mat{W} \mat{D}_{s} \mat{v}_{y}
  =
  \int_{\hat{e}} x_{r} \frac{\partial v_{y}}{\partial s}
  =
  \int_{\hat{e}} \frac{\partial x}{\partial r} \frac{\partial v_{y}}{\partial s}
  =
  \int_{\hat{e}} J\frac{\partial s}{\partial y} \frac{\partial v_{y}}{\partial s},
\end{align}
where the second equality follows from the fact that the quadrature is exact
for inner products and the third equality from the
$L^2$-projection~\eref{eqn:xr:int}. Similar calculations for the other metric
terms yield:
\begin{align}
  \vec{1}^{T} \mat{L}^{T} \mat{W} \mat{J}\mat{r}_{y} \mat{D}_{r} \mat{v}_{y}
  &=
  \int_{\hat{e}} J\frac{\partial r}{\partial y} \frac{\partial v_{y}}{\partial r},\\
  \vec{1}^{T} \mat{L}^{T} \mat{W} \mat{J}\mat{r}_{x} \mat{D}_{r} \mat{v}_{x}
  &=
  \int_{\hat{e}} J\frac{\partial r}{\partial x} \frac{\partial v_{x}}{\partial r},\\
  \vec{1}^{T} \mat{L}^{T} \mat{W} \mat{J}\mat{s}_{x} \mat{D}_{s} \mat{v}_{x}
  &=
  \int_{\hat{e}} J\frac{\partial s}{\partial x} \frac{\partial v_{x}}{\partial s}.
\end{align}
Using this with the definitions of $\mat{S}_{x}$ and $\mat{S}_{y}$ gives
\begin{align}
  \vec{1}^{T} \mat{S}_{x} \vec{v}_{x}
  &=
  \int_{\hat{e}} J\left(\frac{\partial r}{\partial x} \frac{\partial v_{x}}{\partial r}
  + \frac{\partial s}{\partial x} \frac{\partial v_{x}}{\partial s}\right)
  =
  \int_{\hat{e}} J \frac{\partial v_{x}}{\partial x},\\
  \vec{1}^{T} \mat{S}_{y} \vec{v}_{y}
  &=
  \int_{\hat{e}} J\left(\frac{\partial r}{\partial y} \frac{\partial v_{y}}{\partial r}
  + \frac{\partial s}{\partial y} \frac{\partial v_{y}}{\partial s}\right)
  =
  \int_{\hat{e}} J \frac{\partial v_{y}}{\partial y}.
\end{align}
Putting these together we then have that
\begin{align}
  \vec{1}^{T} \mat{S}_{x} \vec{v}_{x} + \vec{1}^{T} \mat{S}_{y} \vec{v}_{y}
  =
  \int_{\hat{e}} J \left(\frac{\partial v_{x}}{\partial x} + \frac{\partial v_{y}}{\partial y}\right)
  =
  \int_{\hat{\partial e}} S_{J} \left(n_{x} v_{x} + n_{y} v_{y}\right).
\end{align}

\section{Proof of Theorem~\ref{thm:conserved:constant}}\label{app:thm:conserved:constant}
\subsection{Constant Preservation}
In order to show that constants are preserved, we let the solution be
constant, i.e., for all $e\in\EE$, $p = \beta$, $v_x = \gamma_{x}$, and $v_x =
\gamma_{y}$ with $\beta, \gamma_{x}, \gamma_{y} \in \mathbb{R}$. Since the
fields are the same constant across all mortar elements $g$, we have for
each $g \in \GG^{e}$ that $v_{n}^{-} =
v_{n}^{+} = n_{x}^{-} \gamma_{x} + n_{y}^{-} \gamma_{y} = \gamma_{n}^{-}$ and
$p^{-} = p^{+} = \beta$ and thus $v_{n}^{*} = \gamma_{n}^{-}$ and $p^{*} = \beta$.
This, along with the fact that the $\mat{S}_{x} \vec{1} = \mat{S}_{y} \vec{1} =
\vec{0}$ implies that~\eref{eqn:semi:p}
\begin{align}
  \mat{M}_{J} \frac{d\vec{p}}{dt} = \vec{0}.
\end{align}
Considering now~\eref{eqn:semi:vx} we have that
\begin{align}
  &\mat{M}_{J} \frac{d\vec{v_{x} }}{dt} =
  \mat{S}_{x}^{T} \beta\vec{1}
  -\sum_{g\in\GG^{e}} \beta {(\mat{L}^{g})}^{T} \mat{n}_{x}^{-g} \mat{S}_{J}^{g} \mat{W}^{g}
  \mat{L}^{g}\vec{1},
\end{align}
In order to show that the right-hand side it zero we let $w \in V^{e}_{h}$ and
multiply by $\vec{w}^{T}$:
\begin{align}
  \vec{w}^{T} \mat{S}_{x}^{T} \beta\vec{1}
  -\sum_{g\in\GG^{e}} \beta \vec{w}^{T}{(\mat{L}^{g})}^{T} \mat{n}_{x}^{-g} \mat{S}_{J}^{g} \mat{W}^{g}
  \vec{1}^{g}
  =
  \vec{w}^{T} \mat{S}_{x}^{T} \beta\vec{1}
  -\sum_{g\in\GG^{e}} \beta
  {\left(\mat{n}_{x}^{-g}\vec{w}^{-}\right)}^{T}
  \mat{S}_{J}^{g} \mat{W}^{g} \vec{1}^{g}.
\end{align}
By Theorem~\ref{thm:GD:div} it follows that
\begin{align}
  \vec{w}^{T} \mat{S}_{x}^{T} \beta\vec{1} = \int_{\hat{\partial e}} S_{J} n_{x} w.
\end{align}
and using this along with~\eref{eqn:surf:mort:int} we have that
\begin{align}
  \vec{w}^{T} \mat{S}_{x}^{T} \beta\vec{1}
  -\sum_{g\in\GG^{e}} \beta
  {\left(\mat{n}_{x}^{-g}\vec{w}^{-}\right)}^{T}
  \mat{S}_{J}^{g} \mat{W}^{g} \vec{1}^{g}
  = 0.
\end{align}
Since this must hold for all $w \in V^{e}_{h}$ it follows that
\begin{align}
  &\mat{M}_{J} \frac{d\vec{v_{x} }}{dt} =
  \mat{S}_{x}^{T} \beta\vec{1}
  -\sum_{g\in\GG^{e}} \beta {(\mat{L}^{g})}^{T} \mat{n}_{x}^{-g} \mat{S}_{J}^{g} \mat{W}^{g}
  \mat{L}^{g}\vec{1} = \vec{0}.
\end{align}
A similar calculation shows that
\begin{align}
  &\mat{M}_{J} \frac{d\vec{v_{y} }}{dt} = \vec{0}
\end{align}
and the scheme is constant preserving.

\subsection{Conservation}
In order to show that the scheme is conservative we
multiply~\eref{eqn:semi:p}--\eref{eqn:semi:vy} each by $\vec{1}^{T}$ and sum
over all elements. If the scheme is conservative, then the sum of each
component should be zero. Considering first~\eref{eqn:semi:p} we have that
\begin{align}
  &\vec{1}^{T}\mat{M}_{J} \frac{d\vec{p}}{dt}
  + \vec{1}^{T}\mat{S}_{x} \vec{v}_{x} + \vec{1}^{T}\mat{S}_{x} \vec{v}_{y}
  =
  -\sum_{g\in\GG^{e}} \vec{1}^{T} {(\mat{L}^{g})}^{T} \mat{S}_{J}^{g} \mat{W}^{g}
  \left(\vec{v}^{*}_{n} - \vec{v}^{-}_{n}\right).
\end{align}
Direct application of Theorem~\ref{thm:GD:div} and~\eref{eqn:surf:mort:int}
then gives
\begin{align}
  &\vec{1}^{T}\mat{M}_{J} \frac{d\vec{p}}{dt}
  =
  -
  \sum_{g\in\GG^{e}} \vec{1}^{T} {(\mat{L}^{g})}^{T} \mat{S}_{J}^{g} \mat{W}^{g}
  \vec{v}^{*}_{n}.
\end{align}
Since, by construction, $v_{n}^{*}$ will be equal and opposite on the two
sides of each mortar it follow that after summing over the whole mesh that
\begin{align}
  &\sum_{e\in\EE}\vec{1}^{T}\mat{M}_{J} \frac{d\vec{p}}{dt} = 0,
\end{align}
and the pressure field $p$ is conserved. Similarly,
multiplying~\eref{eqn:semi:vx} and~\eref{eqn:semi:vy} by $\vec{1}^{T}$ gives
\begin{align}
  &\vec{1}^{T}\mat{M}_{J} \frac{d\vec{v_{x} }}{dt}
  = -\sum_{g\in\GG^{e}} \vec{1}^{T}{(\mat{L}^{g})}^{T} \mat{n}_{x}^{-g} \mat{S}_{J}^{g} \mat{W}^{g}
  \vec{p}^{*},\\
  &\vec{1}^{T}\mat{M}_{J} \frac{d\vec{v_{y} }}{dt}
  = -\sum_{g\in\GG^{e}} \vec{1}^{T} {(\mat{L}^{g})}^{T} \mat{n}_{y}^{-g} \mat{S}_{J}^{g} \mat{W}^{g}
  \vec{p}^{*},
\end{align}
where we have used that $\mat{S}_{x}\vec{1} = \mat{S}_{y}\vec{1} = \vec{0}$.
Summing over the whole mesh and using that the normal vectors are equal and
opposite on both sides of the mortar elements gives:
\begin{align}
  &\sum_{e\in\EE}\vec{1}^{T}\mat{M}_{J} \frac{d\vec{v_{x} }}{dt}
  = 0,\\
  &\sum_{e\in\EE}\vec{1}^{T}\mat{M}_{J} \frac{d\vec{v_{y} }}{dt}
  = 0,
\end{align}
thus the scheme conserves both the velocity components. Note that conservation of
velocity does not require the use of Theorem~\ref{thm:GD:div}
and~\eref{eqn:surf:mort:int}, due to the use of the weak derivative.

\bibliographystyle{elsarticle-num}

\begin{thebibliography}{10}
\expandafter\ifx\csname url\endcsname\relax
  \def\url#1{\texttt{#1}}\fi
\expandafter\ifx\csname urlprefix\endcsname\relax\def\urlprefix{URL }\fi
\expandafter\ifx\csname href\endcsname\relax
  \def\href#1#2{#2} \def\path#1{#1}\fi

\bibitem{BanksHagstrom2016JCP}
J.~Banks, T.~Hagstrom, On {Galerkin} difference methods, Journal of
  Computational Physics 313 (2016) 310 -- 327.
    \newblock{} \href{http://dx.doi.org/10.1016/j.jcp.2016.02.042}
  {\path{doi:10.1016/j.jcp.2016.02.042}}.

\bibitem{BanksHagstromJacangelo2018JCP}
J.~Banks, T.~Hagstrom, J.~Jacangelo, {Galerkin} differences for acoustic and
  elastic wave equations in two space dimensions, Journal of Computational
    Physics (2018). \href {http://dx.doi.org/10.1016/j.jcp.2018.06.029}
  {\path{doi:10.1016/j.jcp.2018.06.029}}.

\bibitem{ChanWangModaveRemacleWarburton2016jcp}
J.~Chan, Z.~Wang, A.~Modave, J.~Remacle, T.~Warburton, {GPU}-accelerated
  discontinuous {Galerkin} methods on hybrid meshes, Journal of Computational
  Physics 318 (2016) 142--168.
    \newblock{}\href{http://dx.doi.org/10.1016/j.jcp.2016.04.003}
  {\path{doi:10.1016/j.jcp.2016.04.003}}.

\bibitem{FisherCarpenterNordstromYamaleevSwanson2012jcp}
T.~C. Fisher, M.~H. Carpenter, J.~Nordstr\"{o}m, N.~K. Yamaleev, C.~Swanson,
  Discretely conservative finite-difference formulations for nonlinear
  conservation laws in split form: {Theory} and boundary conditions, Journal of
  Computational Physics 234 (2013) 353–375.
    \newblock{} \href{http://dx.doi.org/10.1016/j.jcp.2012.09.026}
  {\path{doi:10.1016/j.jcp.2012.09.026}}.

\bibitem{Gassner2013}
G.~Gassner, A skew-symmetric discontinuous galerkin spectral element
  discretization and its relation to sbp-sat finite difference methods, {SIAM}
  Journal on Scientific Computing 35~(3) (2013) A1233--A1253.
    \newblock{} \href{http://dx.doi.org/10.1137/120890144}
  {\path{doi:10.1137/120890144}}.

\bibitem{KoprivaGassner2014SISC}
D.~A. Kopriva, G.~J. Gassner, An energy stable discontinuous {Galerkin}
  spectral element discretization for variable coefficient advection problems,
  {SIAM} Journal on Scientific Computing 36~(4) (2014) A2076--A2099.
    \newblock{} \href{http://dx.doi.org/10.1137/130928650}
  {\path{doi:10.1137/130928650}}.

\bibitem{KozdonDunhamNordstrom2013}
J.~E. Kozdon, E.~M. Dunham, J.~Nordstr\"{o}m, Simulation of dynamic earthquake
  ruptures in complex geometries using high-order finite difference methods,
  Journal of Scientific Computing 55~(1) (2013) 92--124.
    \newblock{} \href{http://dx.doi.org/10.1007/s10915-012-9624-5}
  {\path{doi:10.1007/s10915-012-9624-5}}.

\bibitem{Nordstrom2006}
J.~Nordstr\"{o}m, Conservative finite difference formulations, variable
  coefficients, energy estimates and artificial dissipation, Journal of
  Scientific Computing 29~(3) (2006) 375--404.
    \newblock{} \href{http://dx.doi.org/10.1007/s10915-005-9013-4}
  {\path{doi:10.1007/s10915-005-9013-4}}.

\bibitem{Warburton2013}
T.~Warburton, A low-storage curvilinear discontinuous {Galerkin} method for
  wave problems, {SIAM} Journal on Scientific Computing 35~(4) (2013)
  A1987--A2012.
\newblock{} \href{http://dx.doi.org/10.1137/120899662}
  {\path{doi:10.1137/120899662}}.

\bibitem{FriedrichEtAl2017}
L.~Friedrich, D.~C. Del Rey~Fern{\'a}ndez, A.~R. Winters, G.~J. Gassner, D.~W.
  Zingg, J.~Hicken, Conservative and stable degree preserving {SBP} operators
  for non-conforming meshes, Journal of Scientific Computing\href{http://dx.doi.org/10.1007/s10915-017-0563-z}
  {\path{doi:10.1007/s10915-017-0563-z}}.

\bibitem{KozdonWilcox2018}
J.~Kozdon, L.~Wilcox, An energy stable approach for discretizing hyperbolic
  equations with nonconforming discontinuous {Galerkin} methods, Journal of
  Scientific Computing\href{http://dx.doi.org/10.1007/s10915-018-0682-1}
  {\path{doi:10.1007/s10915-018-0682-1}}.

\bibitem{ChanHewettWarburton2017_curviWADG}
J.~Chan, R.~J. Hewett, T.~Warburton, Weight-adjusted discontinuous galerkin
  methods: Curvilinear meshes, {SIAM} Journal on Scientific Computing 39~(6)
  (2017) A2395--A2421.
\newblock{} \href{http://dx.doi.org/10.1137/16M1089198}
  {\path{doi:10.1137/16M1089198}}.

\bibitem{HesthavenWarburton2008}
J.~S. Hesthaven, T.~Warburton, Nodal Discontinuous Galerkin Methods, Springer
  New York, 2008.
\newblock{} \href{http://dx.doi.org/10.1007/978-0-387-72067-8}
  {\path{doi:10.1007/978-0-387-72067-8}}.

\bibitem{Kopriva2006jsc}
D.~A. Kopriva, Metric identities and the discontinuous spectral element method
  on curvilinear meshes, Journal of Scientific Computing 26~(3) (2006)
  301--327.
\newblock{} \href{http://dx.doi.org/10.1007/s10915-005-9070-8}
  {\path{doi:10.1007/s10915-005-9070-8}}.

\bibitem{ChanEvans2018}
J.~Chan, J.~A. Evans, Multi-patch discontinuous {Galerkin} isogeometric
  analysis for wave propagation: {Explicit} time-stepping and efficient mass
  matrix inversion, Computer Methods in Applied Mechanics and Engineering 333
  (2018) 22--54.
\newblock{} \href{http://dx.doi.org/10.1016/j.cma.2018.01.022}
  {\path{doi:10.1016/j.cma.2018.01.022}}.

\end{thebibliography}

\end{document}